\numberwithin{equation}{section}
\newtheorem{thm}{Theorem}[section]
\newtheorem{cor}[thm]{Corollary}
\newtheorem{lem}[thm]{Lemma}
\newtheorem{prop}[thm]{Proposition}
\theoremstyle{definition}
\newtheorem{defn}{Definition}[section]
\theoremstyle{remark}
\newtheorem{rem}{Remark}[section]
\newtheorem{ex}{Example}[section]
\begin{document}


\title{\Large Representations for weighted Moore-Penrose inverses of partitioned
adjointable operators}\author{Qingxiang Xu\thanks{Corresponding
author. Department of Mathematics, Shanghai Normal University,
Shanghai 200234, P.R.~China, and School of Science, Shanghai Institute of Technology, Shanghai, 201418, P.R.~China (qingxiang$\_$xu@126.com, qxxu@shnu.edu.cn). Supported by the
National Natural Science Foundation of China under grant 11171222, and the Innovation Program of Shanghai Municipal
Education Commission under grant 12ZZ129.}\and Yonghao Chen\thanks{Department of Mathematics, Shanghai Normal
University, Shanghai 200234, P.R.~China
(chenyonghao6232@163.com).}\and Chuanning Song\thanks{Department of
Mathematics, Shanghai Normal University, Shanghai 200234, P.R.~China
(songning1962@163.com, songning@shnu.edu.cn).}}
\date{{}}
\maketitle

\vspace{-8ex}

\def\abstractname {}
\begin{abstract}
\noindent\textbf{Abstract}

\vspace{2ex} For two positive definite adjointable operators $M$ and
$N$, and an adjointable operator $A$ acting on a Hilbert
$C^*$-module, some properties of the weighted Moore-Penrose inverse
$A^\dag_{MN}$ are established. If $A=(A_{ij})$ is $1\times 2$ or
$2\times 2$ partitioned, then general representations for $A^\dag_{MN}$
in terms of the individual blocks of $A_{ij}$ are studied. In the case when $A$
is $1\times 2$ partitioned, a unified representation for
$A^\dag_{MN}$ is presented. In the $2\times 2$ partitioned case, an approach to the construction of the Moore-Penrose inverse
 from the
non-weighted case to the weighted case is provided. Some results
known for matrices are extended to the general setting of operators on Hilbert
$C^*$-modules.

\vspace{2ex}

 \noindent{\it AMS classification:} 15A09; 46L08

\vspace{2ex}

\noindent {\it Keywords:} Hilbert $C^*$-module; Weighted
Moore-Penrose inverse; Partitioned operator
\end{abstract}

\section*{Introduction}
The weighted Moore-Penrose inverse of an arbitrary  (singular and
rectangular) matrix has many applications in the weighted linear
least-squares problems, statistics, neural network, numerical
analysis and so on. For a partitioned matrix $A=(A_{ij})$, it has
been of interest to derive general expressions for the weighted
Moore-Penrose inverse of $A$ in terms of the individual blocks of
$A_{ij}$. If $A=(A_{11}, A_{12})$  is  a $1\times 2$ partitioned
matrix, then some formulas for the (non-weighted) Moore-Penrose inverse
$A^\dag$, such as Cline \cite{cline} and Mihalyffy \cite{miha} are
well-known. In the weighted case, a formula for $A^\dag_{MN}$ of a
$1\times 2$ partitioned matrix $A$ was given by Miao \cite{miao1}.
Later, this formula  was reproved by Chen \cite{chen}, Wang and
Zheng \cite{wang2} by using different methods. Recently, another
formula for $A^\dag_{MN}$ has been obtained by the first author
\cite{xu1}. In this paper, in the general context of Hilbert
$C^*$-module operators, we will provide a unified representation for
$A^\dag_{MN}$ (see Theorem~\ref{thm:unified representation} below).
As a result, the equivalence of the formulas for $A^\dag_{MN}$ given
respectively  in \cite{miao1} and \cite{xu1} is derived.

If $A=\Big(
          \begin{array}{cc}
            A_{11} & A_{12} \\
            A_{21} & A_{22} \\
          \end{array}
        \Big)$ is a $2\times 2$ partitioned matrix, then things may
        become much more complicated. Most works in literature concerning
        representations for $A^\dag$ were carried out under certain
        restrictions on the blocks of $A_{ij}$. In 1991, a general
        expression for $A^\dag$ without any restriction imposed on
        the blocks of $A_{ij}$, was given by Miao in \cite{miao2}. Since then, more than twenty years has passed. However, due to the
        complexity revealed in \cite{hartwig, meyer, miao2}, there has not  been much progress concerning
        the generalization of Miao's result \cite{miao2} from the non-weighted case to the
        weighted case. In this paper, we
        make such an effort in the general setting of Hilbert $C^*$-module operators.

The paper is organized as follows. In Section~\ref{sec:defn of
weighted MP inverse}, in the general setting of Hilbert $C^*$-module
operators, we will establish some properties on weighted
Moore-Penrose inverses. Following the line initiated in \cite{xu2},
in Section~\ref{sec:relationship of w-m-p} we will study the
relationship between weighted Moore-Penrose inverses $A^\dag_{MN}$,
where $A$ is fixed, while $M$ and $N$ are variable. In
Section~\ref{sec:1x2} we will study unified representations for
weighted Moore-Penrose inverses of $1\times 2$ partitioned
adjointable operators. In Section \ref{sec:weighted MP-inverse of
2x2 matrices}, an approach, initiated in \cite{xu1} for $1\times 2$
partitioned adjointable operators, is applied to study the
general expressions for weighted Moore-Penrose inverses of $2\times
2$ partitioned adjointable operators. Our key point is the
construction of a commutative diagram in page \pageref{commutative
diagram}, through which the main results of \cite{miao2} are
generalized from the non-weighted case to the weighted case.

\section{Weighted Moore-Penrose
inverses of adjointable operators}\label{sec:defn of weighted MP
inverse} In this section, in a general setting of adjointable
operators on Hilbert $C^*$-modules, we establish some properties on
weighted Moore-Penrose inverses, most of which are known for
matrices. Throughout this paper, $\mathfrak{A}$ is a $C^*$-algebra,
$\mathbb{C}$ is the complex field, and $\mathbb{C}^{m\times n}$ is
the set of $m\times n$ complex matrices. By a projection, we mean an
idempotent and a self-adjoint element in a certain $C^*$-algebra. For
any Hilbert $\mathfrak{A}$-modules $H$ and $K$, let ${\cal L}(H,K)$
be the set of \emph{adjointable} operators from $H$ to $K$. If
$H=K$, then ${\cal L}(H,H)$, which we abbreviate to ${\cal L}(H)$, is a
unital $C^*$-algebra, whose unit is denoted by $I_H$. For any $A\in
{\cal L}(H,K)$, the range and the null space of $A$ are denoted by
${\cal R}(A)$ and ${\cal N}(A)$, respectively.

Throughout, the notations of ``$\oplus$" and ``$\dotplus$" are used
with different meanings. For any Hilbert $\mathfrak{A}$-modules
$H_1$ and $H_2$, let
$$H_1\oplus H_2=\bigg\{\binom{h_1}{h_2}\,\bigg|\,h_i\in H_i, i=1,2\bigg\},$$ which is also a Hilbert $\mathfrak{A}$-module whose
$\mathfrak{A}$-valued inner product is given by
$$\bigg<\binom{x_1}{y_1}, \binom{x_2}{y_2}\bigg>=\big<x_1,x_2\big>+\big<y_1,
y_2\big>,\ \mbox{for any $x_i\in H_1$ and $y_i\in H_2, i=1,2.$}$$ If both
$H_1$ and $H_2$  are  submodules of a Hilbert
$\mathfrak{A}$-module $H$ such that $H_1\cap H_2=\{0\}$, then we
define
$$H_1\dotplus H_2=\{h_1+h_2\big|\,h_i\in H_i, i=1,2\}\subseteq H.$$
If furthermore $H=H_1\dotplus H_2$, then we call $P_{H_1,H_2}$ the
\emph{oblique projector} along $H_2$ onto $H_1$, where $P_{H_1,H_2}$
is defined by
$$P_{H_1,H_2}(h)=h_1,\ \mbox{for any}\ h=h_1+h_2\in H\ \mbox{with}\
h_i\in H_i,i=1,2.$$

\begin{lem}\label{lem:orthogonal} {\rm (cf.\,\cite[Theorem 3.2]{lance} and \cite[Remark 1.1]{xu3})}\ Let $H, K$ be two Hilbert $\mathfrak{A}$-modules and $A\in {\cal
L}(H,K)$. Then the closeness of any one of the following sets
implies the closeness of the remaining three sets:
$${\cal R}(A), \ {\cal R}(A^*), \ {\cal R}(AA^*),  {\cal R}(A^*A).$$
Furthermore, if ${\cal R}(A)$ is closed, then ${\cal R}(A)={\cal
R}(AA^*)$, ${\cal R}(A^*)={\cal R}(A^*A)$ and with respect to the $\mathfrak{A}$-valued inner product, the following
orthogonal decompositions  hold:
\begin{equation}\label{equ:orthogonal decomposition} H={\cal N}(A)\dotplus
{\cal R}(A^*), \ K={\cal R}(A)\dotplus {\cal N}(A^*).\end{equation}
\end{lem}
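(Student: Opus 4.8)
The plan is to reduce the entire statement to the Hilbert $C^*$-module closed-range theorem (the cited \cite[Theorem 3.2]{lance}), which supplies the only genuinely non-Hilbert-space ingredient: a closed range is automatically orthogonally complemented. Everything else is elementary manipulation with the $\mathfrak{A}$-valued inner product. First I record the identities valid for every $A\in\mathcal{L}(H,K)$ with no closedness hypothesis. From $\langle Ax,Ax\rangle=\langle x,A^*Ax\rangle$ one gets $\mathcal{N}(A)=\mathcal{N}(A^*A)$, and symmetrically $\mathcal{N}(A^*)=\mathcal{N}(AA^*)$. The inclusions $\mathcal{R}(AA^*)\subseteq\mathcal{R}(A)$ and $\mathcal{R}(A^*A)\subseteq\mathcal{R}(A^*)$ are immediate, while $\langle Ax,z\rangle=\langle x,A^*z\rangle=0$ for $z\in\mathcal{N}(A^*)$ shows $\mathcal{R}(A)\perp\mathcal{N}(A^*)$, and symmetrically $\mathcal{R}(A^*)\perp\mathcal{N}(A)$; all of this holds unconditionally.

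Next I invoke Lance's theorem in the form: if $\mathcal{R}(A)$ is closed, then $\mathcal{R}(A^*)$ is closed and $H=\mathcal{N}(A)\dotplus\mathcal{R}(A^*)$, $K=\mathcal{R}(A)\dotplus\mathcal{N}(A^*)$ are orthogonal decompositions. Applying it to the self-adjoint operators $AA^*$ and $A^*A$ gives $K=\mathcal{R}(AA^*)\dotplus\mathcal{N}(A^*)$ and $H=\mathcal{R}(A^*A)\dotplus\mathcal{N}(A)$ whenever the respective ranges are closed, using $\mathcal{N}(AA^*)=\mathcal{N}(A^*)$ and $\mathcal{N}(A^*A)=\mathcal{N}(A)$ from the previous step.

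The chain of equivalences then follows from two complementary arguments. If $\mathcal{R}(A)$ is closed, then for $Ax\in\mathcal{R}(A)$ I decompose $x=x_1+x_2$ with $x_1\in\mathcal{R}(A^*)$ and $x_2\in\mathcal{N}(A)$; since $\mathcal{R}(A^*)$ is closed we may write $x_1=A^*z$, whence $Ax=Ax_1=AA^*z$, proving $\mathcal{R}(A)=\mathcal{R}(AA^*)$ and in particular the closedness of $\mathcal{R}(AA^*)$. Conversely, if $\mathcal{R}(AA^*)$ is closed, then writing $Ax=u+v$ in $K=\mathcal{R}(AA^*)\dotplus\mathcal{N}(A^*)$ forces $v=Ax-u$ to lie simultaneously in the submodule $\mathcal{R}(A)$ and in $\mathcal{N}(A^*)$; orthogonality then gives $\langle v,v\rangle=0$, so $v=0$ and again $\mathcal{R}(A)=\mathcal{R}(AA^*)$, hence $\mathcal{R}(A)$ is closed. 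Running the same two arguments with $A^*$ in place of $A$ yields $\mathcal{R}(A^*)=\mathcal{R}(A^*A)$ under either closedness hypothesis, and combining with the Lance equivalence ``$\mathcal{R}(A)$ closed $\Leftrightarrow\mathcal{R}(A^*)$ closed'' links all four sets. Finally, the two displayed orthogonal decompositions are exactly the output of Lance's theorem under the hypothesis that $\mathcal{R}(A)$ is closed, and the range equalities $\mathcal{R}(A)=\mathcal{R}(AA^*)$, $\mathcal{R}(A^*)=\mathcal{R}(A^*A)$ were established along the way.

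I expect the only real obstacle to be conceptual rather than computational: one must not assume, as in Hilbert space, that $\overline{\mathcal{R}(A)}$ equals the orthogonal complement of $\mathcal{N}(A^*)$, nor that closed submodules split off, since both can fail in a general Hilbert $C^*$-module. The device that sidesteps this is to use closedness of a range only through Lance's complementability theorem, and to exploit that an element $v\in\mathcal{R}(A)\cap\mathcal{N}(A^*)$, together with $\mathcal{R}(A)\perp\mathcal{N}(A^*)$, already forces $v=0$ via $\langle v,v\rangle=0$, which requires no ambient orthogonal complement.
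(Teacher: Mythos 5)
Your proof is correct. Note, however, that the paper itself offers no proof of this lemma: it is quoted from the literature, with Lance's Theorem 3.2 cited for the closedness transfer between $\mathcal{R}(A)$ and $\mathcal{R}(A^*)$ and the two orthogonal decompositions, and Xu--Sheng's Remark 1.1 cited for the four-way equivalence and the range equalities $\mathcal{R}(A)=\mathcal{R}(AA^*)$, $\mathcal{R}(A^*)=\mathcal{R}(A^*A)$. Your argument is a faithful, self-contained reconstruction of exactly that cited content from the single non-elementary input the paper also relies on, namely Lance's closed-range theorem. All the steps check out: the unconditional identities $\mathcal{N}(A^*A)=\mathcal{N}(A)$, $\mathcal{N}(AA^*)=\mathcal{N}(A^*)$ and $\mathcal{R}(A)\perp\mathcal{N}(A^*)$; the forward direction, where closedness of $\mathcal{R}(A)$ plus the decomposition $H=\mathcal{N}(A)\dotplus\mathcal{R}(A^*)$ yields $\mathcal{R}(A)=\mathcal{R}(AA^*)$; and the converse, where Lance applied to the self-adjoint operator $AA^*$ (legitimate since $\mathcal{R}(AA^*)$ is assumed closed, and $\mathcal{N}(AA^*)=\mathcal{N}(A^*)$) produces $K=\mathcal{R}(AA^*)\dotplus\mathcal{N}(A^*)$, after which any $v\in\mathcal{R}(A)\cap\mathcal{N}(A^*)$ dies because $\langle v,v\rangle=0$. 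You also correctly flag and avoid the two Hilbert-space reflexes that fail for Hilbert $C^*$-modules (identifying $\overline{\mathcal{R}(A)}$ with $\mathcal{N}(A^*)^\perp$, and assuming closed submodules are complemented), using closedness only through Lance's complementability statement; this is precisely the discipline the general setting demands.
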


 Throughout the rest
of this section, $H,K$ and $L$ are three Hilbert
$\mathfrak{A}$-modules.

\begin{defn} An element $M$ of ${\cal L}(K)$ is said to be \emph{positive definite}, if $M$ is positive and invertible in ${\cal L}(K)$.
\end{defn}

\begin{prop} Let $M\in {\cal L}(K)$ be
positive definite. Then with the inner-product given by
\begin{equation}\label{equ:M-inner product} \big<x,y\big>_M=\big<x,
My\big>, \ \mbox{for any}\ x,y\in K,\end{equation}  $K$ also becomes
a Hilbert $\mathfrak{A}$-module.
\end{prop}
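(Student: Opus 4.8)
The plan is to observe that the change of inner product leaves the right $\mathfrak{A}$-module structure of $K$ untouched, so it suffices to verify that $\langle\cdot,\cdot\rangle_M$ satisfies the axioms of an $\mathfrak{A}$-valued inner product and that $K$ is complete in the associated norm. The tool I would use throughout is the positive square root $M^{1/2}\in\mathcal{L}(K)$ of $M$, which exists because $M$ is positive, is self-adjoint, and --- since $M$ is moreover invertible --- is itself invertible with bounded inverse. The single identity driving the whole argument is
\[
\langle x,y\rangle_M=\langle x,My\rangle=\langle M^{1/2}x,\,M^{1/2}y\rangle,\qquad x,y\in K,
\]
which follows from $M=M^{1/2}M^{1/2}$ together with $(M^{1/2})^*=M^{1/2}$.

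From this identity the algebraic axioms are immediate. Positivity, $\langle x,x\rangle_M=\langle M^{1/2}x,M^{1/2}x\rangle\ge 0$, and conjugate symmetry, $\langle x,y\rangle_M^{\,*}=\langle y,x\rangle_M$, follow from the corresponding properties of the original inner product together with $M=M^*$. Definiteness is where invertibility enters: if $\langle x,x\rangle_M=0$, then $M^{1/2}x=0$ by definiteness of $\langle\cdot,\cdot\rangle$, and since $M^{1/2}$ is invertible this forces $x=0$. Finally, additivity in the second slot and the module identity $\langle x,ya\rangle_M=\langle x,y\rangle_M\,a$ for $a\in\mathfrak{A}$ hold because $M$, being adjointable, is $\mathfrak{A}$-linear, so that $M(ya)=(My)a$.

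The only genuinely non-routine point is completeness, and this is where I expect the real work to lie. The norm induced by the new inner product is $\|x\|_M=\|\langle x,x\rangle_M\|^{1/2}=\|M^{1/2}x\|$. Because $M$ is positive and invertible, its spectrum is a compact subset of $(0,\infty)$, so there exist constants $0<\alpha\le\beta$ with $\alpha I_K\le M\le\beta I_K$ in $\mathcal{L}(K)$; one may take $\alpha=\|M^{-1}\|^{-1}$ and $\beta=\|M\|$. Applying this operator inequality inside $\langle x,Mx\rangle$ and using that $0\le a\le b$ in a $C^*$-algebra implies $\|a\|\le\|b\|$, I obtain $\alpha\|x\|^2\le\|x\|_M^2\le\beta\|x\|^2$, i.e.\ the two norms are equivalent. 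Since $(K,\|\cdot\|)$ is complete, being a Hilbert $\mathfrak{A}$-module, norm equivalence transfers completeness to $(K,\|\cdot\|_M)$. This shows $(K,\langle\cdot,\cdot\rangle_M)$ is a Hilbert $\mathfrak{A}$-module and completes the argument.
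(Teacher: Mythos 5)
Your proposal is correct and takes essentially the same approach as the paper: both rest on the identity $\Vert x\Vert_M=\Vert M^{1/2}x\Vert$ and deduce completeness from the equivalence of $\Vert\cdot\Vert_M$ with the original norm (the paper takes $C_1=\Vert M^{-1/2}\Vert^{-1}$ and $C_2=\Vert M^{1/2}\Vert$ directly, while you derive the same equivalence from the operator inequalities $\Vert M^{-1}\Vert^{-1}I_K\le M\le\Vert M\Vert I_K$). The only difference is cosmetic: the paper dismisses the algebraic inner-product axioms as clear, citing Lance, whereas you verify them explicitly.
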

\begin{proof} With respect to $\big<\cdot\,,\,\cdot\big>_M$, $K$ is clearly an
inner-product $\mathfrak{A}$-module \cite[P.\,2]{lance}. We prove
that $K$ is complete with respect to the norm induced by
\begin{equation}\label{equ:defn of K-norm}\Vert
x\Vert_M\stackrel{def}{=}\Vert\big<x,x\big>_M\Vert^{\frac12}=\Vert
M^{\frac12}x\Vert, \ \mbox{for any $x\in K$.}\end{equation}  In
fact, if we let $C_1=\Vert M^{-\frac12}\Vert^{-1}>0$ and $C_2=\Vert
M^{\frac12}\Vert>0$, then by (\ref{equ:defn of K-norm}) we can get
\begin{equation*}\label{equ:equivalent norm}C_1\ \Vert x\Vert \le \
\Vert x\Vert_M\ \le C_2\ \Vert x\Vert, \ \mbox{for any}\ x\in
K,\end{equation*} which means that $\Vert\cdot\Vert$ and
$\Vert\cdot\Vert_M$ are equivalent norms on $K$. Since $K$ is
assumed to be complete with respect to the original norm
$\Vert\cdot\Vert$, the completeness of $K$ with respect to the
induced norm $\Vert\cdot\Vert_M$ follows.
\end{proof}

\begin{rem} We use the notation $K_M$ to denote the Hilbert $\mathfrak{A}$-module with the
inner-product given by (\ref{equ:M-inner product}), and call $K_M$
the \emph{weighted space} (with respect to $M$). Following the
notation, for any positive definite element $N$ of ${\cal L}(H)$,
$T\in {\cal L}(H,K)$, $x\in H$ and $y\in K$, we have
\begin{eqnarray*}&& \big<Tx, y\big>_M=\big<Tx,My\big>=\big<x,
T^*My\big>\nonumber=\big<x, N^{-1}T^*My\big>_N.\end{eqnarray*} So if
we regard  $T$ as an element of ${\cal L}(H_N, K_M)$\footnote{\ The
reader should be aware that as sets, ${\cal L}(H,K)$ and ${\cal
L}(H_N,K_M)$ are the same.}, then
\begin{equation}\label{equ:weighted star}T^\#=N^{-1}\,T^*\, M,\end{equation}
where $T^\#\in {\cal L}(K_M,H_N)$ is the adjoint operator of
$T\in {\cal L}(H_N,K_M)$.
\end{rem}

\begin{defn}
Let $A\in {\cal L}(H,K)$ be arbitrary, and let $M\in {\cal L}(K)$ and $N\in
{\cal L}(H)$ be two positive definite operators. The \emph{weighted
Moore-Penrose inverse} $A^\dag_{MN}$ (if it exists) is the element
$X$ of ${\cal L}(K,H)$, which satisfies
\begin{equation}\label{equ:defn of WPR inverse} AXA=A, XAX=X,
(MAX)^*=MAX\ \mbox{and}\ (NXA)^*=NXA.\end{equation} If $M=I_K$
and $N=I_H$, then $A^\dag_{MN}$ is denoted simply by $A^\dag$, which
is called the \emph{Moore-Penrose inverse} of $A$.
\end{defn}

\begin{thm} Let $A\in {\cal
L}(H,K)$ be arbitrary, and let $M\in {\cal L}(K)$ and $N\in {\cal L}(H)$ be
two positive definite operators. Then $A^\dag_{MN}$ exists if and
only if $A$ has a closed range.
\end{thm}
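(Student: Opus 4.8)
The plan is to recognize that the four defining equations of $A^\dag_{MN}$ are nothing but the four (unweighted) Moore-Penrose equations for $A$ once $A$ is viewed as an element of $\mathcal{L}(H_N,K_M)$, so that the whole statement reduces to the fact that an adjointable operator with closed range admits a Moore-Penrose inverse. Concretely, recall from (\ref{equ:weighted star}) that the adjoint of $A\in\mathcal{L}(H_N,K_M)$ is $A^\#=N^{-1}A^*M$; by the same computation with the roles of $H_N$ and $K_M$ interchanged, the weighted adjoint of any $X\in\mathcal{L}(K_M,H_N)$ is $X^\#=M^{-1}X^*N$. Using $M^*=M$ and $N^*=N$ one checks that $(MAX)^*=MAX$ is equivalent to $(AX)^\#=AX$ and that $(NXA)^*=NXA$ is equivalent to $(XA)^\#=XA$. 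Hence $X$ satisfies (\ref{equ:defn of WPR inverse}) if and only if $AXA=A$, $XAX=X$, $(AX)^\#=AX$ and $(XA)^\#=XA$, i.e.\ $X$ is the ordinary Moore-Penrose inverse of $A$ regarded in $\mathcal{L}(H_N,K_M)$.

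Since $\Vert\cdot\Vert_M$ and $\Vert\cdot\Vert$ are equivalent on $K$ (and $\Vert\cdot\Vert_N$, $\Vert\cdot\Vert$ equivalent on $H$), a submodule of $K$ is closed for $\Vert\cdot\Vert$ precisely when it is closed for $\Vert\cdot\Vert_M$; in particular $\mathcal{R}(A)$ is closed in $K$ if and only if it is closed in $K_M$. Thus it suffices to prove the equivalence for the unweighted Moore-Penrose inverse of $A\in\mathcal{L}(H_N,K_M)$, and all topological statements may be read in either norm.

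For necessity, suppose $X=A^\dag_{MN}$ exists. From $AXA=A$ the operator $P=AX$ is idempotent, and since $(MAX)^*=MAX$ it is self-adjoint in $K_M$ (that is, $P^\#=P$), hence a projection in $\mathcal{L}(K_M)$; the range of a projection is closed. The inclusions $\mathcal{R}(A)=\mathcal{R}(AXA)\subseteq\mathcal{R}(AX)\subseteq\mathcal{R}(A)$ then force $\mathcal{R}(A)=\mathcal{R}(P)$, so $\mathcal{R}(A)$ is closed.

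For sufficiency, assume $\mathcal{R}(A)$ is closed. Applying Lemma~\ref{lem:orthogonal} to $A\in\mathcal{L}(H_N,K_M)$, whose adjoint is $A^\#$, yields via (\ref{equ:orthogonal decomposition}) the orthogonal decompositions $K_M=\mathcal{R}(A)\dotplus\mathcal{N}(A^\#)$ and $H_N=\mathcal{N}(A)\dotplus\mathcal{R}(A^\#)$. The restriction $A|_{\mathcal{R}(A^\#)}\colon\mathcal{R}(A^\#)\to\mathcal{R}(A)$ is then a bijection, and I would define $X\in\mathcal{L}(K_M,H_N)$ to be its inverse on $\mathcal{R}(A)$ and zero on $\mathcal{N}(A^\#)$; checking the four weighted Moore-Penrose equations for this $X$ is then routine. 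The main obstacle is precisely this sufficiency step: one must confirm that Lemma~\ref{lem:orthogonal} is applicable with the weighted adjoint $A^\#$ in place of $A^*$, which is legitimate because $A\in\mathcal{L}(H_N,K_M)$ is a genuine adjointable operator between Hilbert $\mathfrak{A}$-modules, and one must verify that the $X$ assembled from the two pieces is actually adjointable in $\mathcal{L}(K_M,H_N)$ rather than merely bounded — a subtlety special to the $C^*$-module setting, where boundedness alone does not guarantee adjointability.
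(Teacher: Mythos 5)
Your proposal is correct, and its skeleton---the translation of the weighted equations (\ref{equ:defn of WPR inverse}) into the ordinary Moore--Penrose equations for $A$ viewed in ${\cal L}(H_N,K_M)$ via (\ref{equ:weighted star}), the norm-equivalence argument letting closedness be read in either norm, and the necessity argument via the idempotent $AX$---is exactly what the paper does. Where you genuinely diverge is in sufficiency: the paper simply cites \cite[Theorem 2.2 and Proposition 2.4]{xu3} for the existence of the unweighted Moore--Penrose inverse of a closed-range adjointable operator, whereas you re-derive that existence from the orthogonal decompositions $H_N={\cal N}(A)\dotplus{\cal R}(A^\#)$ and $K_M={\cal R}(A)\dotplus{\cal N}(A^\#)$ supplied by Lemma \ref{lem:orthogonal}. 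The adjointability obstacle you flag but do not resolve is the one real issue in your route, and it can be closed with material already at hand: since those decompositions are orthogonal, ${\cal R}(A)$ and ${\cal R}(A^\#)$ are orthogonally complemented, so the projection $P\in{\cal L}(K_M)$ onto ${\cal R}(A)$ and the inclusion $\iota\colon{\cal R}(A^\#)\to H_N$ are adjointable; the restriction $A_0=A|_{{\cal R}(A^\#)}\colon{\cal R}(A^\#)\to{\cal R}(A)$ is adjointable (with adjoint $A^\#|_{{\cal R}(A)}$) and bijective, hence $A_0^{-1}$ is adjointable with $\big(A_0^{-1}\big)^\#=\big(A_0^\#\big)^{-1}$, and therefore $X=\iota\circ A_0^{-1}\circ P$ is a composite of adjointable operators, so $X\in{\cal L}(K_M,H_N)$. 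With that paragraph added, your argument is a complete and self-contained proof, which is what it buys you: no reliance on the external existence theorem. The paper's citation, by contrast, is shorter and delegates precisely these $C^*$-module subtleties (complementability and adjointability, which are not automatic from boundedness) to \cite{xu3}; as written, your sufficiency step is still a sketch and needs the above completion.
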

\begin{proof} If $A^\dag_{MN}$ exists, then
$AA^\dag_{MN}$ is an idempotent, so ${\cal R}(A)={\cal
R}(AA^\dag_{MN})$ is closed. Conversely, suppose that ${\cal R}(A)$
is closed in $K$, then ${\cal R}(A)$ is also closed in $K_M$, so by
\cite[Theorem~2.2 and Proposition 2.4]{xu3} there exists uniquely an
element $X\in {\cal L}(K_M,H_N)$ satisfying
\begin{eqnarray*}\label{equ:defnnnn of WPR inverse 1}&& AXA=A, \ XAX=X,
(AX)^\#=AX \ \mbox{and}\ (XA)^\#=XA.\end{eqnarray*} By
(\ref{equ:weighted star}) we get $(AX)^\#=M^{-1}(AX)^*M$ and
$(XA)^\#=N^{-1}(XA)^*N$. As $M$ and $N$ are self-adjoint, the last
two equalities in (\ref{equ:defn of WPR inverse}) hold.
\end{proof}

\begin{rem} Let $A\in {\cal
L}(H,K)$  have a closed range, and let $M\in {\cal L}(K)$, $N\in {\cal
L}(H)$ be positive definite. As in the finite-dimensional case
\cite[Theorem 1.4.4]{wang1}, by \cite[Theorem 2.2]{xu3} we have
\begin{eqnarray*}\label{eqn:range1}&&
{\cal R}(A^\dag_{MN})={\cal R}(A^\#)={\cal R}(N^{-1}A^*M)=N^{-1}{\cal R}(A^*),\\
\label{eqn:range2}&&{\cal N}(A^\dag_{MN})={\cal N}(A^\#) ={\cal
N}(N^{-1}A^*M)=M^{-1}{\cal N}(A^*).
\end{eqnarray*}
\end{rem}

\begin{prop}{\rm (cf.\,\cite[Lemma 0.1]{chen})}\label{prop:m-p inverse***}\ Let $A\in {\cal
L}(H,K)$  have a closed range, and let $M\in {\cal L}(K)$ and $N\in {\cal
L}(H)$ be positive definite. Then $A^\dag_{MN}$ is the unique
element $X$ of ${\cal L}(K,H)$ which satisfies
\begin{equation}\label{equ:unique two2} A^*MAX=A^*M, \ {\cal R}(NX)\subseteq {\cal R}(A^*).\end{equation}
\end{prop}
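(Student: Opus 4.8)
The plan is to establish the statement in two stages: first to check that $X=A^\dag_{MN}$ itself satisfies the two conditions in (\ref{equ:unique two2}), and then to show that these two conditions determine $X$ uniquely.

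For the first stage I would work directly from the four defining equations in (\ref{equ:defn of WPR inverse}) with $X=A^\dag_{MN}$. Left-multiplying $AXA=A$ by $M$ and taking adjoints gives $A^*X^*A^*M=A^*M$ (using $M^*=M$); substituting the self-adjointness relation $MAX=(MAX)^*=X^*A^*M$ then yields $A^*MAX=A^*(MAX)=A^*X^*A^*M=A^*M$, which is the first condition. For the range condition I would invoke the range identity ${\cal R}(A^\dag_{MN})=N^{-1}{\cal R}(A^*)$ noted in the Remark above, whence ${\cal R}(NA^\dag_{MN})={\cal R}(A^*)$, so in particular ${\cal R}(NA^\dag_{MN})\subseteq{\cal R}(A^*)$.

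For uniqueness, I would take an arbitrary solution $X$ of (\ref{equ:unique two2}) and set $Z=X-A^\dag_{MN}$. Subtracting the two copies of the first equation gives $A^*MAZ=0$, while the range condition applied to both $X$ and $A^\dag_{MN}$, together with the fact that ${\cal R}(A^*)$ is a submodule, gives ${\cal R}(NZ)\subseteq{\cal R}(A^*)$. The key observation is that $A^*MAZ=0$ already forces $AZ=0$: for each $k$ one computes $\big<AZk,AZk\big>_M=\big<Zk,A^*MAZk\big>=0$, and since $M$ is positive definite the form $\big<\cdot,\cdot\big>_M$ is definite (i.e.\ $\big<w,w\big>_M=0$ implies $w=0$), so $AZk=0$. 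Hence ${\cal R}(Z)\subseteq{\cal N}(A)$, while ${\cal R}(NZ)\subseteq{\cal R}(A^*)$ means ${\cal R}(Z)\subseteq N^{-1}{\cal R}(A^*)$.

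The crux of the argument, where the weighted geometry enters, is to conclude that ${\cal N}(A)\cap N^{-1}{\cal R}(A^*)=\{0\}$. I would obtain this by regarding $A$ as an element of ${\cal L}(H_N,K_M)$, whose adjoint is $A^\#=N^{-1}A^*M$ by (\ref{equ:weighted star}), and applying Lemma~\ref{lem:orthogonal} in the weighted space $H_N$: since ${\cal R}(A)$ is closed in $K$ it is also closed in $K_M$ (the norms $\Vert\cdot\Vert$ and $\Vert\cdot\Vert_M$ being equivalent), so the orthogonal decomposition $H_N={\cal N}(A)\dotplus{\cal R}(A^\#)$ holds with respect to $\big<\cdot,\cdot\big>_N$, and ${\cal R}(A^\#)={\cal R}(N^{-1}A^*M)=N^{-1}{\cal R}(A^*)$. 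The two summands therefore intersect only in $0$, forcing $Z=0$ and hence $X=A^\dag_{MN}$. The main thing to be careful about is precisely this transfer of the closed-range and orthogonality statements of Lemma~\ref{lem:orthogonal} into the weighted modules $H_N$ and $K_M$; once the norm equivalence is used to justify it, the remainder is routine.
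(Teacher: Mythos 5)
Your proposal is correct, but it follows a genuinely different route from the paper's. The paper does not re-derive existence or uniqueness at all: it quotes \cite[Proposition 2.4]{xu3}, which says that $A^\dag_{MN}$ is the unique $X\in {\cal L}(K_M,H_N)$ satisfying $AX=AA^\dag_{MN}$ and ${\cal R}(X)\subseteq {\cal R}(A^\#)$, then uses (\ref{equ:weighted star}) to rewrite (\ref{equ:unique two2}) as $A^\#AX=A^\#$, ${\cal R}(X)\subseteq {\cal R}(A^\#)$, and finishes with a two-line algebraic equivalence of the two condition sets (via $A^\#AA^\dag_{MN}=A^\#$ and $(A^\dag_{MN})^\#A^\#AX=AA^\dag_{MN}AX=AX$). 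You instead verify directly from the defining equations (\ref{equ:defn of WPR inverse}) that $A^\dag_{MN}$ solves (\ref{equ:unique two2}), and prove uniqueness from scratch: $A^*MAZ=0$ forces $AZ=0$ because $\big<AZk,AZk\big>_M=\big<Zk,A^*MAZk\big>=0$ and $M$ is positive definite, and then ${\cal R}(Z)\subseteq {\cal N}(A)\cap N^{-1}{\cal R}(A^*)=\{0\}$ by applying Lemma~\ref{lem:orthogonal} to $A$ viewed in ${\cal L}(H_N,K_M)$; your transfers to the weighted modules (closedness of ${\cal R}(A)$ in $K_M$ via norm equivalence, ${\cal R}(A^\#)=N^{-1}{\cal R}(A^*)$) are all legitimate, and the trivial-intersection property is exactly what the paper's ``$\dotplus$'' notation encodes. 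What the paper's route buys is brevity and the insight that (\ref{equ:unique two2}) is precisely the weighted translation of a known characterization; what yours buys is self-containedness (only the definition, the Remark on ${\cal R}(A^\dag_{MN})$, and Lemma~\ref{lem:orthogonal} are needed, with no appeal to \cite[Proposition 2.4]{xu3}) and an explicit display of the geometric reason for uniqueness, namely the transversality of ${\cal N}(A)$ and $N^{-1}{\cal R}(A^*)$, which is the same fact the paper exploits later when it invokes oblique projectors in the proof of Lemma~\ref{lem:result-2}. One small remark: your range verification could be made even more elementary by computing $NA^\dag_{MN}=NA^\dag_{MN}AA^\dag_{MN}=(NA^\dag_{MN}A)^*A^\dag_{MN}=A^*(A^\dag_{MN})^*NA^\dag_{MN}$, which gives ${\cal R}(NA^\dag_{MN})\subseteq {\cal R}(A^*)$ without citing the Remark.
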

\begin{proof} By \cite[Proposition 2.4]{xu3} we know that
$A^\dag_{MN}$ is the unique element $X$ of ${\cal L}(K_M,H_N)$ which
satisfies \begin{equation}\label{equ:unique two1}AX=AA^\dag_{MN} \
\mbox{and}\ {\cal R}(X)\subseteq {\cal R}(A^\#).\end{equation} In
view of (\ref{equ:weighted star}), we know that (\ref{equ:unique
two2}) can be rewritten as
\begin{equation}\label{equ:chen y character1} A^\#AX=A^\#, \ {\cal R}(X)\subseteq
R\big(A^\#\big).\end{equation} Since $A^\#AA^\dag_{MN}=A^\#$ and
$\big(A_{MN}^\dag\big)^\#A^\#AX=(AA^\dag_{MN})^\#
AX=AA^\dag_{MN}AX=AX,$ the equivalence of (\ref{equ:unique two1})
and (\ref{equ:chen y character1}) follows.
\end{proof}

\begin{lem}{\rm (cf.\,\cite[Lemma 0.3]{chen})}\label{lem:chen-result-1}\ Let $A\in {\cal L}(H,K)$ have a closed range, and let $M\in {\cal L}(K)$ be positive definite. Then for any $X\in {\cal L}(K,H)$,
the following two statements are equivalent:
\begin{enumerate}
\item[{\rm (i)}] $AXA=A, (MAX)^*=MAX$;
\item[{\rm (ii)}] $A^*MAX=A^*M$.
\end{enumerate}
If condition {\rm (i)} is satisfied, then for any positive definite
element $N\in {\cal L}(H)$, $X$ has the form
\begin{equation}\label{equ:solution 1
3}X=A^\dag_{MN}+(I_H-A^\dag_{MN}A)Y, \ \mbox{for some}\ Y\in {\cal
L}(K,H).
\end{equation}
\end{lem}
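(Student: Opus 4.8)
The plan is to establish the equivalence $\mathrm{(i)}\Leftrightarrow\mathrm{(ii)}$ by direct algebraic manipulation, the only analytic ingredient being the positive-definiteness of $M$; the closedness of ${\cal R}(A)$ enters only at the very end, to guarantee that $A^\dag_{MN}$ exists so that the representation formula makes sense. I would dispatch the implication $\mathrm{(i)}\Rightarrow\mathrm{(ii)}$ first. Since $M=M^*$, the hypothesis $(MAX)^*=MAX$ reads $X^*A^*M=MAX$, so substituting and using $AXA=A$ gives $A^*MAX=A^*(X^*A^*M)=(AXA)^*M=A^*M$, which is $\mathrm{(ii)}$.

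For $\mathrm{(ii)}\Rightarrow\mathrm{(i)}$, the central device is the cancellation fact that $V^*MV=0$ forces $V=0$ for any $V\in{\cal L}(H,K)$: writing $M=M^{\frac12}M^{\frac12}$ gives $(M^{\frac12}V)^*(M^{\frac12}V)=0$, hence $M^{\frac12}V=0$, and since $M^{\frac12}$ is invertible, $V=0$. To recover $AXA=A$, I would multiply $A^*MAX=A^*M$ on the right by $A$ to obtain $A^*MA(XA-I_H)=0$; putting $V=A(XA-I_H)=AXA-A$, the identity $V^*MV=(XA-I_H)^*\big[A^*MA(XA-I_H)\big]=0$ yields $V=0$, i.e.\ $AXA=A$. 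For the self-adjointness of $MAX$, I would take adjoints of $\mathrm{(ii)}$ (using $M=M^*$) to get $X^*A^*MA=MA$, then multiply this on the right by $X$ and multiply $\mathrm{(ii)}$ on the left by $X^*$; both products equal $X^*A^*MAX$, so $MAX=X^*A^*MAX=X^*A^*M=(MAX)^*$.

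Finally, for the representation, assume $\mathrm{(i)}$, hence $\mathrm{(ii)}$, holds. Since $A^\dag_{MN}$ itself satisfies $\mathrm{(i)}$ through its defining relations $AA^\dag_{MN}A=A$ and $(MAA^\dag_{MN})^*=MAA^\dag_{MN}$, the equivalence just proved (equivalently, Proposition~\ref{prop:m-p inverse***}) gives $A^*MAA^\dag_{MN}=A^*M$. Subtracting yields $A^*MA(X-A^\dag_{MN})=0$, and the same $V^*MV=0$ argument with $V=A(X-A^\dag_{MN})$ shows $AX=AA^\dag_{MN}$; consequently $A^\dag_{MN}AX=A^\dag_{MN}AA^\dag_{MN}=A^\dag_{MN}$. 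Taking $Y=X$ then gives $A^\dag_{MN}+(I_H-A^\dag_{MN}A)X=A^\dag_{MN}+X-A^\dag_{MN}AX=X$, which is the asserted form. I expect the direction $\mathrm{(ii)}\Rightarrow\mathrm{(i)}$ to be the main obstacle, and everything there hinges on the cancellation lemma $V^*MV=0\Rightarrow V=0$, which is precisely where the positive-definiteness (not mere positivity) of $M$ is indispensable.
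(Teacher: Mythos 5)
Your proof is correct and is essentially the paper's own argument with the weighted-adjoint notation unwound: via (\ref{equ:weighted star}), your identities involving $A^*M$ are precisely the paper's identities for $A^{\#}=N^{-1}A^*M$, and your cancellation lemma $V^*MV=0\Rightarrow V=0$ is exactly the step $T^{\#}T=0\Rightarrow T=0$ that the paper invokes twice (for $T=AXA-A$ and $T=A(X-A^\dag_{MN})$). The only superficial differences are notational self-containedness and that the paper realizes the representation with $Y=X-A^\dag_{MN}$ while you take $Y=X$; both choices work.
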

\begin{proof} (1) Let $N$ be any positive definite element of ${\cal L}(H)$. By (\ref{equ:weighted star}) we know that conditions
(i) and (ii) can be rephrased respectively as
\begin{eqnarray}\label{eqn:1 and 3 w-m-p}&& AXA=A, \ (AX)^\#=AX,\\\label{eqn:1+3 w-m-p}&&A^\#AX=A^\#.\end{eqnarray}
Suppose that $(\ref{eqn:1 and 3 w-m-p})$ is satisfied. Then
$$A^\#AX=A^\#(AX)^\#=(AXA)^\#=A^\#.$$ Conversely, if
(\ref{eqn:1+3 w-m-p}) is satisfied, then  it is easy to show that
$(AXA-A)^\#(AXA-A)=0$, so $AXA=A$. Furthermore,
$$(AX)^\#=X^\#A^\#=X^\#A^\#AX=(A^\#AX)^\#X=(A^\#)^\#X=AX.$$

(2) Suppose that $X\in {\cal L}(K,H)$ is given such that
(\ref{eqn:1+3 w-m-p}) is satisfied. Then
$$A^\#A(X-A^\dag_{MN})=A^\#-A^\#=0\Longrightarrow\big(A(X-A^\dag_{MN})\big)^\#A(X-A^\dag_{MN})=0,$$
 so
$A(X-A^\dag_{MN})=0$; or equivalently,
$A^\dag_{MN}A(X-A^\dag_{MN})=0$, hence there exists $Y\in {\cal
L}(K,H)$ such that $X-A^\dag_{MN}=(I_H-A^\dag_{MN}A)Y$.
\end{proof}

\begin{defn}
 An element $X$ of ${\cal
L}(K,H)$ is said to be a $(1,3)$-inverse of $A\in {\cal L}(H,K)$,
written $X\in A\{1,3\}$, if $AXA=A \ \mbox{and}\  (AX)^*=AX.$
\end{defn}

\begin{prop}\label{prop:trivial-3} Let $A\in {\cal L}(H,K)$ have a closed range. Then for any $X\in
(AA^*)\{1,3\}$, we have $A^\dag=A^*X$.
\end{prop}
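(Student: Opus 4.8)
The plan is to identify $A^*X$ with $A^\dag$ via the characterization of the (unweighted) Moore--Penrose inverse established in Proposition~\ref{prop:m-p inverse***}. Taking $M=I_K$ and $N=I_H$ there, $A^\dag$ is the unique $Y\in{\cal L}(K,H)$ satisfying $A^*AY=A^*$ and ${\cal R}(Y)\subseteq{\cal R}(A^*)$. For the candidate $Y=A^*X$ the range condition is automatic, since ${\cal R}(A^*X)\subseteq{\cal R}(A^*)$; hence the whole proof reduces to verifying the single identity $A^*AA^*X=A^*$.

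To exploit the hypothesis, I first write out $X\in(AA^*)\{1,3\}$ as $AA^*XAA^*=AA^*$ together with $(AA^*X)^*=AA^*X$, the latter being equivalent to $AA^*X=X^*AA^*$. The key intermediate step is to promote the two-sided relation $AA^*XAA^*=AA^*$ to the one-sided relation $AA^*XA=A$. To this end I set $C=AA^*XA-A$. On one hand the first hypothesis gives $CA^*=AA^*XAA^*-AA^*=0$, so that $C$ vanishes on ${\cal R}(A^*)$; on the other hand, for $h\in{\cal N}(A)$ one has $Ch=AA^*X(Ah)-Ah=0$, so $C$ also vanishes on ${\cal N}(A)$. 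Since ${\cal R}(A)$ is closed, Lemma~\ref{lem:orthogonal} supplies the decomposition $H={\cal N}(A)\dotplus{\cal R}(A^*)$, and therefore $C=0$, i.e.\ $AA^*XA=A$.

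With this identity available, the remaining computation is routine bookkeeping with adjoints. Taking adjoints of $AA^*XA=A$ yields $A^*X^*AA^*=A^*$, and substituting the self-adjointness relation $AA^*X=X^*AA^*$ gives
$$A^*AA^*X=A^*(X^*AA^*)=A^*X^*AA^*=A^*.$$
Thus $Y=A^*X$ meets both conditions of Proposition~\ref{prop:m-p inverse***}, and by its uniqueness clause $A^\dag=A^*X$, as claimed.

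I expect the only genuine obstacle to be the middle paragraph, namely the passage from $AA^*XAA^*=AA^*$ to $AA^*XA=A$: this cancellation is not purely algebraic and really requires the closed-range hypothesis through the orthogonal-type decomposition of Lemma~\ref{lem:orthogonal}. An alternative would be to apply Lemma~\ref{lem:chen-result-1} to the self-adjoint operator $AA^*$ (which has closed range by Lemma~\ref{lem:orthogonal}) so as to rephrase the $(1,3)$-condition as $(AA^*)^2X=AA^*$, and then to cancel using ${\cal R}(A^*)\cap{\cal N}(A)=\{0\}$; this is the same difficulty in a slightly different guise. Everything else is formal manipulation of adjoints.
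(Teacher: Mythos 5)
Your proof is correct, and it reaches the conclusion by a genuinely different route than the paper. Both arguments share the same outer frame: reduce everything to Proposition~\ref{prop:m-p inverse***} with $M=I_K$, $N=I_H$, note that ${\cal R}(A^*X)\subseteq{\cal R}(A^*)$ is automatic, and then verify $A^*AA^*X=A^*$. The difference lies in how that identity is obtained. The paper stays entirely at the operator-algebra level: it applies the implication (i)$\Longrightarrow$(ii) of Lemma~\ref{lem:chen-result-1} with $A$ replaced by $AA^*$ and $M=I_K$ to convert the $(1,3)$-inverse hypothesis into the single equation $AA^*AA^*X=AA^*$, and then cancels the leading factor by left-multiplying with $A^\dag$, using $A^\dag AA^*=(AA^\dag A)^*=A^*$; this makes the whole proof two lines, at the cost of invoking Lemma~\ref{lem:chen-result-1} (whose own proof runs through the weighted-adjoint formalism). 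You instead prove the intermediate identity $AA^*XA=A$ by an element-wise geometric argument: the operator $C=AA^*XA-A$ kills ${\cal R}(A^*)$ by the first defining equation and kills ${\cal N}(A)$ trivially, hence vanishes on $H={\cal N}(A)\dotplus{\cal R}(A^*)$ (Lemma~\ref{lem:orthogonal}); taking adjoints and using $(AA^*X)^*=AA^*X$ then gives $A^*AA^*X=A^*$. Your route is more self-contained (it bypasses Lemma~\ref{lem:chen-result-1} entirely) and produces the explicit equation $AA^*XA=A$ along the way, while the paper's route avoids any element-chasing and is shorter given the lemmas already established. Your closing remark is also accurate: in both arguments the closed-range hypothesis is what licenses the cancellation step --- in the paper through the existence of $A^\dag$, in yours through the decomposition $H={\cal N}(A)\dotplus{\cal R}(A^*)$.
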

\begin{proof} Put $Y=A^*X$. By (\ref{equ:unique two2}) it is
sufficient to verify that
$$A^*AY=A^*, \ {\cal R}(Y)\subseteq {\cal R}(A^*).$$
The second condition is obviously satisfied. Replacing $A, M$ with
$AA^*$ and $I_K$ respectively, by ``(i)$\Longrightarrow$ (ii)" in
Lemma \ref{lem:chen-result-1} we obtain $AA^*AA^*X=AA^*$, therefore
\begin{eqnarray*}&&
A^*AY=A^*AA^*X=A^\dag (AA^*AA^*X)=A^\dag
AA^*=A^*.\qedhere\end{eqnarray*}
\end{proof}

\section{Relationship between weighted Moore-Penrose
inverses}\label{sec:relationship of w-m-p}

Throughout this section, $H$ and $K$ are two Hilbert
$\mathfrak{A}$-modules, $M\in {\cal L}(K)$ and $N_1, N_2\in {\cal
L}(H)$ are three positive definite operators. The purpose of this
section is to generalize \cite[Lemma 2.4]{xu2} from the
finite-dimensional case to the Hilbert $C^*$-module case. For any
$A\in {\cal L}(H,K)$, if ${\cal R}(A)$ is closed, then as in
\cite{xu2} we define
\begin{eqnarray}\label{equ:defn
of R M N1 N2}
R_{M;N_1,N_2}&=&I_H+(I_H-A^\dag_{MN_1}A)N_1^{-1}(N_2-N_1)\nonumber\\
&=&A^\dag_{MN_1}A+(I_H-A^\dag_{MN_1}A)N_1^{-1}N_2.\end{eqnarray}

\begin{lem}\label{lem:R} Let $A\in {\cal L}(H,K)$ have a closed range.
The operator $R_{M;N_1,N_2}$ defined by (\ref{equ:defn of R M N1
N2}) is invertible.
\end{lem}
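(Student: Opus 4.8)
The plan is to use the second expression in (\ref{equ:defn of R M N1 N2}) and to analyze $R_{M;N_1,N_2}$ through the idempotent $E:=A^\dag_{MN_1}A$. First I would record the elementary properties of $E$: from $A^\dag_{MN_1}AA^\dag_{MN_1}=A^\dag_{MN_1}$ we get $E^2=E$, while $AE=A$ gives $\mathcal{N}(E)=\mathcal{N}(A)$ and $\mathcal{R}(E)=\mathcal{R}(A^\dag_{MN_1})=N_1^{-1}\mathcal{R}(A^*)$. The decisive algebraic fact, read off from the weighted Moore--Penrose equation $(N_1XA)^*=N_1XA$ with $X=A^\dag_{MN_1}$, is that $N_1E$ is self-adjoint, i.e. $N_1E=E^*N_1$; writing $Q:=I_H-E$ this rearranges to $N_1QN_1^{-1}=Q^*$. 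Since $\mathcal{R}(A)$ is closed, Lemma~\ref{lem:orthogonal} shows $\mathcal{R}(A^*)$ is closed and $\mathcal{N}(A)$ is orthogonally complemented in $H$; moreover the bounded idempotent $E$ induces the (in general oblique) direct sum $H=\mathcal{R}(E)\dotplus\mathcal{N}(A)$ with adjointable projections $E$ and $Q$.

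Next I would compute the action of $R:=R_{M;N_1,N_2}$ on each summand. For $x\in\mathcal{N}(A)$ one has $Qx=x$, so $Rx=x+QN_1^{-1}(N_2-N_1)x=QN_1^{-1}N_2x\in\mathcal{N}(A)$, whence $R$ maps $\mathcal{N}(A)$ into itself; for $x\in\mathcal{R}(E)$ the $\mathcal{R}(E)$-component of $Rx$ is $Ex=x$ because $EQ=0$. Thus, with respect to $H=\mathcal{R}(E)\dotplus\mathcal{N}(A)$, $R$ is block lower triangular with diagonal blocks $I_{\mathcal{R}(E)}$ and $B:=QN_1^{-1}N_2\big|_{\mathcal{N}(A)}\in\mathcal{L}(\mathcal{N}(A))$. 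Invertibility of such a triangular operator (whose inverse can be written down explicitly) reduces to invertibility of $B$. The crux is therefore to invert $B$. Let $G_i$ denote the compression of $N_i$ to the orthogonally complemented submodule $\mathcal{N}(A)$; each $G_i$ is positive definite on $\mathcal{N}(A)$, since $\langle G_ix,x\rangle=\langle N_ix,x\rangle\ge c_i\langle x,x\rangle$ for a constant $c_i>0$ with $N_i\ge c_iI$. Using $N_1QN_1^{-1}=Q^*$ together with $Qw=w$ for $w\in\mathcal{N}(A)$, I would establish, for all $x,w\in\mathcal{N}(A)$, the identity $\langle N_1Bx,w\rangle=\langle Q^*N_2x,w\rangle=\langle N_2x,w\rangle$; since $Bx,w\in\mathcal{N}(A)$ this reads $\langle G_1Bx,w\rangle=\langle G_2x,w\rangle$, i.e. $G_1B=G_2$, so $B=G_1^{-1}G_2$. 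Being a product of invertible operators on $\mathcal{N}(A)$, $B$ is invertible, and hence so is $R_{M;N_1,N_2}$.

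The step requiring the most care, and the genuine obstacle relative to the finite-dimensional argument, is upgrading injectivity to full invertibility in the Hilbert $C^*$-module setting, where being bounded below is not by itself sufficient. The identification $B=G_1^{-1}G_2$ is exactly what resolves this, since it exhibits the compression $B$ as an honest invertible operator rather than merely an injective one. The remaining points are routine but worth flagging: that $\mathcal{N}(A)$ is orthogonally complemented (supplied by the closedness of $\mathcal{R}(A)$ through Lemma~\ref{lem:orthogonal}), so that the compressions $G_i$ are adjointable and positive definite, and the standard fact that an adjointable operator which is block lower triangular with invertible diagonal blocks is invertible.
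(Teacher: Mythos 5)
Your proof is correct, and it shares the paper's overall skeleton while settling the crucial step by a genuinely different argument. The paper also works with the idempotent $P=A^\dag_{MN_1}A$ and reduces everything to the compression $S|_{H_1}$, where $S=(I_H-P)N_1^{-1}N_2(I_H-P)$ and $H_1=(I_H-P)H=\mathcal{N}(A)$; this is exactly your $B=QN_1^{-1}N_2\big|_{\mathcal{N}(A)}$, and the paper's explicit inverse $Y=P+(S|_{H_1})^{-1}(I_H-P)-(S|_{H_1})^{-1}(I_H-P)N_1^{-1}N_2P$ is precisely the lower-triangular inversion you describe. The divergence is in how invertibility of this compression is proved. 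The paper shows bijectivity directly: injectivity from $\mathcal{N}(S)=\mathcal{R}(P)$, and surjectivity by applying Lemma~\ref{lem:orthogonal} to the identity $N_1S=\big(N_2^{\frac12}(I_H-P)\big)^*\big(N_2^{\frac12}(I_H-P)\big)$ so as to compute $\mathcal{R}(S|_{H_1})=H_1$ --- a range-chasing argument. You instead compress the symmetry relation $N_1(I_H-P)=(I_H-P)^*N_1$ (the same consequence of the fourth weighted Moore--Penrose equation that the paper exploits) to the orthogonally complemented submodule $\mathcal{N}(A)$, obtaining $G_1B=G_2$ with $G_1,G_2$ the positive definite compressions of $N_1,N_2$, whence $B=G_1^{-1}G_2$ is invertible outright. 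Your route is more algebraic: it sidesteps the surjectivity computation entirely and identifies $B$ conceptually as the ``change of weight'' operator between the two metrics on $\mathcal{N}(A)$, which is exactly the right way to handle the obstacle you flag (bounded below does not imply invertible). The price is that you must invoke the orthogonal decomposition $H=\mathcal{N}(A)\dotplus\mathcal{R}(A^*)$ of Lemma~\ref{lem:orthogonal} to define the compressions $G_i$ and see that they are adjointable and positive definite, whereas the paper uses that lemma only for range equalities and never needs the orthogonal projection onto $\mathcal{N}(A)$. Both arguments consume the closed-range hypothesis solely through Lemma~\ref{lem:orthogonal}, and both yield an explicit formula for $R_{M;N_1,N_2}^{-1}$.
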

\begin{proof} Let $P=A^\dag_{MN_1}A$, $S=(I_H-P)N_1^{-1}N_2(I_H-P)$, $H_1=(I_H-P)H$
 and $S|_{H_1}: H_1\to H_1$ be the restriction of
$S$ to $H_1$.

First, we prove that $S|_{H_1}\in {\cal L}(H_1)$ is invertible. By
the last condition in (\ref{equ:defn of WPR inverse}) we get
\begin{equation}\label{equ:the expression of N1 S} N_1S=(I_H-P)^*N_2(I_H-P)=\big(N_2^\frac
12(I_H-P)\big)^*\big(N_2^\frac 12(I_H-P)\big).\end{equation} As $P$
is idempotent, we have ${\cal N}(S)={\cal N}(N_1S)={\cal
N}(I_H-P)={\cal R}(P),$ which means that ${\cal N}(S|_{H_1})={\cal
R}(P)\cap H_1=\{0\}$. Furthermore, since ${\cal
R}\big((N_2^\frac12(I_H-P)\big)$ is closed, we may apply
Lemma~\ref{lem:orthogonal} to (\ref{equ:the expression of N1 S}) to
conclude that
\begin{eqnarray*}&& {\cal R}(S|_{H_1})={\cal R}(S)=N_1^{-1}{\cal R}(N_1S)=N_1^{-1}{\cal R}\big((I_H-P)^*N_2^\frac12\big)
\\&&={\cal R}\big(N_1^{-1}(I_H-P)^*\big)={\cal
R}\big((I_H-P)N_1^{-1}\big)={\cal R}\big(I_H-P)=H_1.\end{eqnarray*}
This completes the proof of the invertibility of $S|_{H_1}$.

Next, let
\begin{equation*}Y=P+(S|_{H_1})^{-1}(I_H-P)-(S|_{H_1})^{-1}(I_H-P)N_1^{-1}N_2P.\end{equation*}
Then since $R_{M;N_1,N_2}=P+(I_H-P)N_1^{-1}N_2P+S$, it is easy to
verify that $R_{M;N_1,N_2}Y=YR_{M;N_1,N_2}=I_H$.
\end{proof}

\begin{lem}\label{lem:result-2} {\rm (cf.\,\cite[Lemma 2.4]{xu2})}\ Suppose that $A\in {\cal L}(H,K)$ has a closed range. Then
$A^\dag_{MN_2}= R^{-1}_{M;N_1,N_2}A^\dag_{MN_1}$, where
$R_{M;N_1,N_2}$ is defined by (\ref{equ:defn of R M N1 N2}).
\end{lem}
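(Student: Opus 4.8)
The plan is to apply the characterization of the weighted Moore--Penrose inverse from Proposition~\ref{prop:m-p inverse***}. Abbreviate $R=R_{M;N_1,N_2}$ and $P=A^\dag_{MN_1}A$. Since $R$ is invertible by Lemma~\ref{lem:R}, the operator $X=R^{-1}A^\dag_{MN_1}$ is well defined, and by the uniqueness asserted in Proposition~\ref{prop:m-p inverse***} it is enough to show that $X$ satisfies the two defining relations of $A^\dag_{MN_2}$, namely $A^*MAX=A^*M$ and ${\cal R}(N_2X)\subseteq {\cal R}(A^*)$. Establishing these two facts is exactly the claimed identity $A^\dag_{MN_2}=R^{-1}A^\dag_{MN_1}$.

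The key preliminary observation is that $AR=A$, and hence $AR^{-1}=A$. Indeed $AP=AA^\dag_{MN_1}A=A$ forces $A(I_H-P)=0$, so from $R=P+(I_H-P)N_1^{-1}N_2$ one gets $AR=AP=A$; right-multiplying by $R^{-1}$ yields $AR^{-1}=A$. The first defining relation is then immediate: $A^*MAX=A^*M(AR^{-1})A^\dag_{MN_1}=A^*MAA^\dag_{MN_1}=A^*M$, the last equality being the first relation of (\ref{equ:unique two2}) read off for $N_1$.

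The range condition is the more delicate part. Using the range identities recorded in the Remark following the existence theorem together with ${\cal R}(A^\dag_{MN_1})={\cal R}(P)$, I would rewrite ${\cal R}(A^*)=N_1{\cal R}(P)$, so that ${\cal R}(N_2X)\subseteq {\cal R}(A^*)$ becomes equivalent to the operator equation $(I_H-P)N_1^{-1}N_2X=0$. The definition of $R$ supplies the convenient relation $(I_H-P)N_1^{-1}N_2=R-P$; substituting $X=R^{-1}A^\dag_{MN_1}$ turns the left-hand side into $(R-P)R^{-1}A^\dag_{MN_1}=(I_H-PR^{-1})A^\dag_{MN_1}$. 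Finally $PR^{-1}A^\dag_{MN_1}=A^\dag_{MN_1}(AR^{-1})A^\dag_{MN_1}=A^\dag_{MN_1}AA^\dag_{MN_1}=A^\dag_{MN_1}$, where I again use $AR^{-1}=A$ and then $XAX=X$; hence the required equation holds.

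I expect the main obstacle to be the range condition, and in particular the bookkeeping that reduces the inclusion ${\cal R}(N_2X)\subseteq {\cal R}(A^*)$ to the clean algebraic identity built on $(I_H-P)N_1^{-1}N_2=R-P$; once $AR^{-1}=A$ is in place, everything else is a short computation. A parallel route, should the above get tangled, is to verify the equivalent statement $RA^\dag_{MN_2}=A^\dag_{MN_1}$ directly, expanding $RA^\dag_{MN_2}$ and using that $AA^\dag_{MN_2}=AA^\dag_{MN_1}$ is the common $M$-orthogonal projection onto ${\cal R}(A)$ together with ${\cal R}(N_2A^\dag_{MN_2})\subseteq {\cal R}(A^*)$.
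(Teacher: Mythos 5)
Your proof is correct, but it takes a genuinely different route from the paper's. The paper identifies the operator $R^{-1}_{M;N_1,N_2}(I_H-A^\dag_{MN_1}A)N_1^{-1}N_2$ with the oblique projector $I_H-A^\dag_{MN_2}A$ (by checking it annihilates $N_2^{-1}{\cal R}(A^*)$ and fixes ${\cal N}(A)$), deduces $A^\dag_{MN_2}A=R^{-1}A^\dag_{MN_1}A$, and then right-multiplies by $A^\dag_{MN_1}$, invoking the fact that $AA^\dag_{MN_1}=AA^\dag_{MN_2}$ is the common oblique projector onto ${\cal R}(A)$ along $M^{-1}{\cal N}(A^*)$. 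You instead plug the candidate $X=R^{-1}A^\dag_{MN_1}$ directly into the uniqueness characterization of Proposition~\ref{prop:m-p inverse***}, with the identity $AR=A$ (hence $AR^{-1}=A$) doing all the work: the equation $A^*MAX=A^*M$ is then a one-line computation, and the inclusion ${\cal R}(N_2X)\subseteq{\cal R}(A^*)$ reduces, via ${\cal R}(A^*)=N_1{\cal R}(P)$ and the idempotency of $P$ (so that ${\cal R}(P)={\cal N}(I_H-P)$, a point you should state explicitly), to $(R-P)R^{-1}A^\dag_{MN_1}=0$, which follows from $PR^{-1}A^\dag_{MN_1}=A^\dag_{MN_1}AA^\dag_{MN_1}=A^\dag_{MN_1}$. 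Your argument is shorter, purely algebraic, and avoids the oblique-projector facts entirely; what the paper's route buys in exchange is the explicit formula for $I_H-A^\dag_{MN_2}A$ in equation (\ref{equ:1-A+ M N2 A}), which is precisely what feeds the Remark containing (\ref{equ:R -1 equal}) and is then used in the proof of Theorem~\ref{thm:unified representation}. With your proof, that identity would still need a separate (easy) derivation.
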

\begin{proof}
Let $A^\#=N_1^{-1}A^*M\in {\cal L}(K_M, H_{N_1})$  be the conjugate
operator of $A\in {\cal L}(H_{N_1}, K_M)$. To simplify the notation,
we define
\begin{equation}\label{equ:defn of X}X=R_{M;N_1,N_2}^{-1}\cdot (I_H-A^\dag_{MN_1}A)N_1^{-1}N_2.\end{equation}
Then
$$(I_H-A^\dag_{MN_1}A)N_1^{-1}A^*=(I_H-A^\dag_{MN_1}A)A^\#M^{-1}=0,$$ so
by (\ref{equ:defn of X}) we have \begin{equation} \label{equ:null
space of X}XN_2^{-1}{\cal R}(A^*)=0.\end{equation} Since
$A^\dag_{MN_1}A(I_H-A^\dag_{MN_1}A)=0$, by (\ref{equ:defn of R M N1
N2}) and (\ref{equ:defn of X})  we have
\begin{eqnarray}&&X(I_H-A^\dag_{MN_1}A)=\big(R_{M;N_1,N_2}^{-1}\cdot A^\dag_{MN_1}A+X\big)(I_H-A^\dag_{MN_1}A)\nonumber\\
\label{equ:noname-2} &&=R_{M;N_1,N_2}^{-1}\cdot
\Big(A^\dag_{MN_1}A+(I_H-A^\dag_{MN_1}A)N_1^{-1}N_2\Big)(I_H-A^\dag_{MN_1}A)\nonumber\\
&&=R_{M;N_1,N_2}^{-1}\cdot R_{M;N_1,N_2}\cdot
(I_H-A^\dag_{MN_1}A)=I_H-A^\dag_{MN_1}A.\end{eqnarray} As
$I_H-A^\dag_{MN_2}A$ is the oblique projector of $H$ along
$N_2^{-1}{\cal R}(A^*)$ onto ${\cal N}(A)={\cal
R}(I_H-A^\dag_{MN_1}A)$, in view of (\ref{equ:null space of X}) and
(\ref{equ:noname-2}) we conclude that $I_H-A^\dag_{MN_2}A=X$.
Furthermore, by (\ref{equ:defn of X}) and (\ref{equ:defn of R M N1
N2}) we have
\begin{eqnarray}\label{equ:1-A+ M N2
A}&&I_H-A^\dag_{MN_2}A=X=R_{M;N_1,N_2}^{-1}\cdot(I_H-A^\dag_{MN_1}A)N_1^{-1}N_2
\\&&=R_{M;N_1,N_2}^{-1}\cdot(R_{M;N_1,N_2}-A^\dag_{MN_1}A)=I_H-R_{M;N_1,N_2}^{-1}\cdot A^\dag_{MN_1}A.\nonumber
\end{eqnarray}
It follows that
\begin{equation}\label{equ:A+ M N2 A} A^\dag_{MN_2}A=R^{-1}_{M;N_1,N_2}\cdot
A^\dag_{MN_1}A.\end{equation} Note that
$AA^\dag_{MN_1}=AA^\dag_{MN_2}$ is  the oblique projector of $K$
along $M^{-1}{\cal N}(A^*)$ onto ${\cal R}(A)$, so if we multiply
$A^\dag_{MN_1}$ from the right on both sides of (\ref{equ:A+ M N2
A}), then we may obtain
\begin{eqnarray*}\label{equ:noname-1}&&A^\dag_{MN_2}=A^\dag_{MN_2}AA^\dag_{MN_2}=A^\dag_{MN_2}AA^\dag_{MN_1}
=R^{-1}_{M;N_1,N_2}\cdot A^\dag_{MN_1}.\qedhere
\end{eqnarray*}
\end{proof}

\begin{rem}
With the notation of Lemma~\ref{lem:result-2}, by (\ref{equ:1-A+ M
N2 A}) we obtain
\begin{equation}\label{equ:R -1
equal}(I_H-A^\dag_{MN_2}A)N_2^{-1}=R_{M;N_1,N_2}^{-1}\cdot(I_H-A^\dag_{MN_1}A)N_1^{-1}.\end{equation}
\end{rem}

\section{Unified representations for weighted Moore-Penrose inverses of $1\times 2$ partitioned
operators}\label{sec:1x2}

Throughout this section, $H_1,H_2$ and $H_3$ are three Hilbert
$\mathfrak{A}$-modules, $A\in {\cal L}(H_1,H_3)$ and $B\in {\cal
L}(H_2,H_3)$ are arbitrary, $M\in {\cal L}(H_3)$ and \begin{equation}\label{equ:defn of N}N=\left(\begin{array}{ccc} N_1 & L\\
L^* & N_2\end{array}\right) \in {\cal L}(H_1\oplus
H_2)\end{equation} are two positive definite operators, where $N_1\in
{\cal L}(H_1), L\in {\cal L}(H_2,H_1)$ and $N_2\in {\cal L}(H_2)$.
By \cite[Section~5]{xu1} we know that both $N_1$ and $S(N)$  are
positive definite, where $S(N)$ is the Schur complement of $N$
defined by $$S(N)=N_2-L^*N_1^{-1}L.$$ When $A$ has a closed range, we
put
\begin{equation}\label{equ:defn of
C}C=(I_{H_3}-AA^\dag_{MN_1})\,B\in {\cal L}(H_2,H_3).\end{equation}

\begin{lem}\label{lem:condition in range} Let $A\in {\cal L}(H_1,H_3)$ have a closed range. Then
\begin{enumerate}
\item[{\rm (i)}] ${\cal R}\binom{\,A^*\,}{C^*}={\cal R}(A^*)\oplus {\cal
R}(C^*)$;

\item[{\rm (ii)}] ${\cal R}(A^*)={\cal
N}\big((I_{H_1}-A^\dag_{MN_1}A)N_1^{-1}\big)$.
\end{enumerate}
\end{lem}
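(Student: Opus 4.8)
The plan is to treat the two parts separately, handling (ii) first since it motivates the construction used in (i).

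For (ii), I would exploit that $P:=A^\dag_{MN_1}A$ is idempotent. From the second Moore--Penrose equation $A^\dag_{MN_1}AA^\dag_{MN_1}=A^\dag_{MN_1}$ one gets $\mathcal{R}(P)=\mathcal{R}(A^\dag_{MN_1})$, and the Remark preceding Proposition~\ref{prop:m-p inverse***} identifies $\mathcal{R}(A^\dag_{MN_1})=N_1^{-1}\mathcal{R}(A^*)$. Since $P$ is idempotent, $\mathcal{N}(I_{H_1}-P)=\mathcal{R}(P)=N_1^{-1}\mathcal{R}(A^*)$. Finally, because $N_1^{-1}$ is invertible, $x\in\mathcal{N}\big((I_{H_1}-P)N_1^{-1}\big)$ iff $N_1^{-1}x\in\mathcal{N}(I_{H_1}-P)=N_1^{-1}\mathcal{R}(A^*)$, i.e.\ iff $x\in\mathcal{R}(A^*)$; this is exactly (ii).

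For (i), the inclusion $\mathcal{R}\binom{A^*}{C^*}\subseteq\mathcal{R}(A^*)\oplus\mathcal{R}(C^*)$ is immediate, so the real work is the reverse inclusion. I would write $E:=AA^\dag_{MN_1}$, an idempotent, and establish two identities for its ordinary adjoint $E^*$: first $A^*E^*=(AA^\dag_{MN_1}A)^*=A^*$, which uses only $AA^\dag_{MN_1}A=A$; second $C^*E^*=0$, which follows from $C=(I_{H_3}-E)B$ together with $(I_{H_3}-E^*)E^*=E^*-(E^*)^2=0$. Given arbitrary $u=A^*p\in\mathcal{R}(A^*)$ and $v=C^*q\in\mathcal{R}(C^*)$, I would then set $h:=E^*p+(I_{H_3}-E^*)q$ and verify, using these two identities together with $C^*(I_{H_3}-E^*)=C^*$ and $A^*(I_{H_3}-E^*)=0$, that $A^*h=u$ and $C^*h=v$; hence $\binom{u}{v}\in\mathcal{R}\binom{A^*}{C^*}$, giving the reverse inclusion.

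The delicate point is not the algebra but ensuring the weighted structure does not obstruct it: the projector $E$ is $M$-selfadjoint rather than selfadjoint, so its ordinary adjoint $E^*$ is merely idempotent, not a projection, and one must resist the temptation to replace $E^*$ by $E$. The only properties actually needed, however, are idempotency of $E$ (hence of $E^*$) and the defining equation $AA^\dag_{MN_1}A=A$, both of which survive weighting; this is precisely why the two-term splitting $h=E^*p+(I_{H_3}-E^*)q$ cleanly decouples the $A^*$- and $C^*$-components. I therefore expect no genuine obstacle beyond bookkeeping, with both parts reducing to basic idempotent identities and the range formula recalled in the Remark.
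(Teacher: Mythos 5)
Your proposal is correct, and on part (i) it is the paper's own argument: your vector $h=E^*p+(I_{H_3}-E^*)q$ is exactly the paper's $\zeta=(AA^\dag_{MN_1})^*\xi+(I_{H_3}-AA^\dag_{MN_1})^*\eta$, with the identities $A^*E^*=A^*$ and $C^*E^*=0$ (which the paper leaves implicit) spelled out. On part (ii) you take a mildly different but equally valid route: the paper works with the ordinary adjoint of the idempotent, using the fourth Penrose equation to write $I_{H_1}-(A^\dag_{MN_1}A)^*=N_1(I_{H_1}-A^\dag_{MN_1}A)N_1^{-1}$ and then identifying ${\cal R}(A^*)$ with the null space of that conjugated operator, whereas you instead quote the range formula ${\cal R}(A^\dag_{MN_1})=N_1^{-1}{\cal R}(A^*)$ from the Remark and use ${\cal N}(I_{H_1}-P)={\cal R}(P)$ for the idempotent $P=A^\dag_{MN_1}A$. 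Both proofs rest on the same two ingredients (idempotency and invertibility of $N_1$); yours outsources the weighted-adjoint bookkeeping to the Remark's cited range identity, while the paper's keeps the computation self-contained, so the difference is essentially cosmetic.
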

\begin{proof} (i) For any $\xi, \eta\in H_3$, let
$\zeta=(AA^\dag_{MN_1})^*\xi+(I_{H_3}-AA^\dag_{MN_1})^*\eta.$ Then
 $A^*\zeta=A^*\xi$ and
$C^*\zeta=C^*\eta$, so
$\binom{A^*\xi}{C^*\eta}=\binom{\,A^*\,}{C^*}\zeta\in {\cal
R}\binom{\,A^*\,}{C^*}.$

(ii)  As $AA^\dag_{MN_1}A=A$, we have
\begin{eqnarray*}&& {\cal R}(A^*)={\cal R}\big((A^\dag_{MN_1}A)^*\big)={\cal N}
\big(I_{H_1}-(A^\dag_{MN_1}A)^*\big)\\
&&={\cal N}\big(N_1(I_{H_1}-A^\dag_{MN_1}A)N_1^{-1}\big)={\cal
N}\big((I_{H_1}-A^\dag_{MN_1}A)N_1^{-1}\big). \qedhere
\end{eqnarray*}
\end{proof}

Although the technique lemma in \cite{chen} (\cite[Lemma 0.2]{chen})
is no longer true in the infinite-dimensional case, we can still
provide a formula for $(A,C)^\dag_{MN}$ by following the line in
\cite{chen} together with some modifications.

\begin{thm}\label{thm:first main result} {\rm (cf.\,\cite[Theorem 1.1]{chen})}
Let $C$ be defined by (\ref{equ:defn of C}), and suppose that ${\cal
R}(A)$, ${\cal R}(C)$ and ${\cal R}(A,C)$  are all  closed.
 Then
\begin{equation}(A,C)^\dag_{MN}=\left(\begin{array}{ccc} A^\dag_{MN_1}-(I_{H_1}-A^\dag_{MN_1}A)N_1^{-1}LU\\
U\end{array}\right),\end{equation} where
\begin{eqnarray}&&\label{eqn:defn of
S}S=N_2-L^*(I_{H_1}-A^\dag_{MN_1}A)N_1^{-1}L=S(N)+L^*A^\dag_{MN_1}AN_1^{-1}L\in {\cal L}(H_2),\\
&&\label{eqn:defn of X2}
U=C^\dag_{MS}-(I_{H_2}-C^\dag_{MS}C)S^{-1}L^*A^\dag_{MN_1}\in {\cal
L}(H_3,H_2).
\end{eqnarray}
\end{thm}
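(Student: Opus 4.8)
The plan is to show that the operator
$$X=\binom{X_1}{X_2},\qquad X_1=A^\dag_{MN_1}-(I_{H_1}-A^\dag_{MN_1}A)N_1^{-1}LU,\qquad X_2=U,$$
coincides with $(A,C)^\dag_{MN}$ by verifying the two defining conditions of Proposition~\ref{prop:m-p inverse***}, applied to the $1\times 2$ operator $(A,C)$ with weights $M$ and $N$, namely
$$(A,C)^*M(A,C)X=(A,C)^*M \qquad\text{and}\qquad {\cal R}(NX)\subseteq {\cal R}\big((A,C)^*\big).$$
First I would record two preliminaries. Since ${\cal R}(A,C)$ is closed, $(A,C)^\dag_{MN}$ exists and is the \emph{unique} solution of these two conditions, so it suffices to exhibit one solution. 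Moreover, writing $P:=A^\dag_{MN_1}A$, one has $S=S(N)+L^*PN_1^{-1}L$ with $S(N)$ positive definite and the second summand equal to $(N_1^{-1}L)^*\big(N_1P\big)(N_1^{-1}L)$; since $P$ is idempotent with $N_1P=(N_1P)^*$ one gets $N_1P=P^*N_1P\ge 0$, so this summand is positive and $S$ is positive definite. Hence $C^\dag_{MS}$, and therefore $U$, are well defined.

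For the first condition the key observation is that $A^*MC=0$. Indeed, Lemma~\ref{lem:chen-result-1} applied to $A^\dag_{MN_1}$ gives $A^*MAA^\dag_{MN_1}=A^*M$, whence $A^*M(I_{H_3}-AA^\dag_{MN_1})=0$ and so $A^*MC=A^*M(I_{H_3}-AA^\dag_{MN_1})B=0$; taking adjoints, $C^*MA=0$ as well. Consequently the Gram operator $(A,C)^*M(A,C)$ is block diagonal with diagonal blocks $A^*MA$ and $C^*MC$, and the first condition splits into the two equations $A^*MAX_1=A^*M$ and $C^*MCX_2=C^*M$. Each follows from the defining relation of the corresponding weighted inverse ($A^*MAA^\dag_{MN_1}=A^*M$ and $C^*MCC^\dag_{MS}=C^*M$) together with the fact that left multiplication by $A^*MA$ (resp. $C^*MC$) annihilates the correction terms $(I_{H_1}-A^\dag_{MN_1}A)N_1^{-1}LU$ and $(I_{H_2}-C^\dag_{MS}C)S^{-1}L^*A^\dag_{MN_1}$.

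For the range condition I would invoke Lemma~\ref{lem:condition in range}(i) to identify ${\cal R}\big((A,C)^*\big)={\cal R}(A^*)\oplus {\cal R}(C^*)$, which reduces the desired inclusion to the two block-row inclusions ${\cal R}(N_1X_1+LX_2)\subseteq {\cal R}(A^*)$ and ${\cal R}(L^*X_1+N_2X_2)\subseteq {\cal R}(C^*)$. A direct computation collapses the first row to $N_1X_1+LX_2=N_1A^\dag_{MN_1}\big(I_{H_3}+AN_1^{-1}LU\big)$ and, after recognizing $N_2-L^*(I_{H_1}-A^\dag_{MN_1}A)N_1^{-1}L$ as exactly $S$ in order to absorb the $N_2U$ term, collapses the second row to $L^*X_1+N_2X_2=SC^\dag_{MS}\big(I_{H_3}+CS^{-1}L^*A^\dag_{MN_1}\big)$. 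It then suffices to use the range identities ${\cal R}(A^\dag_{MN_1})=N_1^{-1}{\cal R}(A^*)$ and ${\cal R}(C^\dag_{MS})=S^{-1}{\cal R}(C^*)$ recorded earlier, which yield ${\cal R}(N_1A^\dag_{MN_1})={\cal R}(A^*)$ and ${\cal R}(SC^\dag_{MS})={\cal R}(C^*)$; both inclusions are then immediate.

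I expect the two genuinely delicate points to be the algebraic collapse of the second block row—where one must identify $N_2-L^*(I_{H_1}-A^\dag_{MN_1}A)N_1^{-1}L$ with $S$ so that the whole expression factors through $SC^\dag_{MS}$—and the handling of the range condition, for which Lemma~\ref{lem:condition in range}(i) is essential in order to replace ${\cal R}\big((A,C)^*\big)$ by the direct sum that decouples the two rows. The remainder is routine bookkeeping with the Penrose relations for $A^\dag_{MN_1}$ and $C^\dag_{MS}$ and with the oblique-projector identities, and the conclusion then follows from the uniqueness asserted in Proposition~\ref{prop:m-p inverse***}.
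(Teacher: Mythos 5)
Your proof is correct, and the steps you defer as ``routine'' do go through. The positivity of $S$ follows exactly as you say: writing $P=A^\dag_{MN_1}A$, the fourth Penrose condition gives $N_1P=(N_1P)^*$, hence $N_1P=P^*N_1P\ge 0$ and $L^*PN_1^{-1}L=(N_1^{-1}L)^*(N_1P)(N_1^{-1}L)\ge 0$. The Gram operator $(A,C)^*M(A,C)$ is block diagonal because $A^*MC=(C^*MA)^*=0$, and the two collapses you flag as delicate are genuine identities:
\begin{align*}
N_1X_1+LX_2&=N_1A^\dag_{MN_1}+N_1PN_1^{-1}LU=N_1A^\dag_{MN_1}\big(I_{H_3}+AN_1^{-1}LU\big),\\
L^*X_1+N_2X_2&=L^*A^\dag_{MN_1}+SU=SC^\dag_{MS}\big(I_{H_3}+CS^{-1}L^*A^\dag_{MN_1}\big),
\end{align*}
the second using $N_2-L^*(I_{H_1}-P)N_1^{-1}L=S$ to absorb the $N_2U$ term and then $S(I_{H_2}-C^\dag_{MS}C)S^{-1}L^*A^\dag_{MN_1}=L^*A^\dag_{MN_1}-SC^\dag_{MS}CS^{-1}L^*A^\dag_{MN_1}$. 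Combined with ${\cal R}(N_1A^\dag_{MN_1})={\cal R}(A^*)$, ${\cal R}(SC^\dag_{MS})={\cal R}(C^*)$ and Lemma~\ref{lem:condition in range}(i), this settles the range condition, and the uniqueness in Proposition~\ref{prop:m-p inverse***} finishes the argument.

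Your route is, however, not the paper's: the paper derives the formula rather than verifying it. It starts from the same two ingredients---Proposition~\ref{prop:m-p inverse***} and the orthogonality $C^*MA=0$---but uses \emph{both} parts of Lemma~\ref{lem:condition in range}, part (ii) serving to convert the decoupled range inclusions into operator equations; this turns the characterization into the four-equation system (\ref{eqn:1-})--(\ref{eqn:4-}), stated with an arbitrary auxiliary weight $N_3$ on $H_2$. That system is then solved: the solutions of (\ref{eqn:1-}) and (\ref{eqn:2-}) are parametrized by (\ref{equ:solution 1 3}) of Lemma~\ref{lem:chen-result-1}, substitution into (\ref{eqn:3-}) forces the shape of $X_1$, and the judicious choice $N_3=S$ makes (\ref{eqn:4-}) decouple and produce $U$. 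Your verification bypasses Lemma~\ref{lem:condition in range}(ii), the parametrization, and the $N_3$-trick entirely, and needs only the implication (i)$\Longrightarrow$(ii) of Lemma~\ref{lem:chen-result-1}; the price is that the formula, in particular the distinguished weight $S$, must be guessed in advance. So your argument is the shorter and more elementary certificate of correctness, while the paper's derivation explains where the expression comes from and shows it is forced, not merely checkable.
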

\begin{proof}Note that $A^\dag_{MN_1}A$ is a projection on the weighted space $(H_1)_{N_1}$, so for any $\xi\in H_2$, we have
\begin{eqnarray*}&&\big<L^*A^\dag_{MN_1}AN_1^{-1}L\xi,
\xi\big>=\big<(A^\dag_{MN_1}A)(N_1^{-1}L\xi),N_1^{-1}L\xi\big>_{N_1}\ge
0,
\end{eqnarray*}
hence $L^*A^\dag_{MN_1}AN_1^{-1}L$ is positive \cite[Lemma
4.1]{lance}, which means that the operator $S$ defined by
(\ref{eqn:defn of S}) is positive definite.  Note also that
\begin{eqnarray}\label{eqn:noname-4}&&C^*MA=B^*(I_{H_3}-AA^\dag_{MN_1})^*MA=B^*M(I_{H_3}-AA^\dag_{MN_1})A=0.\hspace{2em}{}\end{eqnarray}
Now let $N_3$ be any positive definite element of ${\cal L}(H_2)$.
For any $X_1\in {\cal L}(H_3,H_1)$ and $X_2\in {\cal L}(H_3,H_2)$,
by Proposition \ref{prop:m-p inverse***} we know that
$\binom{\,X_1\,}{X_2}=(A, C)^\dag_{MN}$ if and only if
\begin{equation}\label{equ:noname-3}\binom{A^*}{C^*}M(A,C)\binom{\,X_1\,}{X_2}=\binom{A^*}{C^*}M,
\ R\left(N\binom{\,X_1\,}{X_2}\right) \subseteq
R\binom{\,A^*\,}{C^*}.\end{equation} Combining the above two
conditions with (\ref{eqn:noname-4}), we may apply
Lemma~\ref{lem:condition in range} to conclude that
$\binom{\,X_1\,}{X_2}=(A, C)^\dag_{MN}$ if and only if the following
four equations hold:
\begin{eqnarray}\label{eqn:1-}&&A^*MAX_1=A^*M;\\
\label{eqn:2-}&&C^*MCX_2=C^*M;\\
\label{eqn:3-}&&(I_{H_1}-A^\dag_{MN_1}A)N_1^{-1}(N_1X_1+LX_2)=0;\\
\label{eqn:4-}&&(I_{H_2}-C^\dag_{MN_3}C)N_3^{-1}(L^*X_1+N_2X_2)=0.
\end{eqnarray}
By (\ref{equ:solution 1 3}) we have
\begin{eqnarray}\label{eqn:5-}&&X_1=A^\dag_{MN_1}+(I_{H_1}-A^\dag_{MN_1}A)Y_1,\\
\label{eqn:6-}&&X_2=C^\dag_{MN_3}+(I_{H_2}-C^\dag_{MN_3}C)Y_2,
\end{eqnarray}
for some $Y_1\in {\cal L}(H_3,H_1)$ and $Y_2\in {\cal L}(H_3,H_2)$.
It follows from (\ref{eqn:3-}) and (\ref{eqn:5-}) that
$$(I_{H_1}-A^\dag_{MN_1}A)Y_1+(I_{H_1}-A^\dag_{MN_1}A)N_1^{-1}LX_2=0.$$
Combining the above equality with (\ref{eqn:5-}) we get
\begin{equation}\label{eqn:7-}
X_1=A^\dag_{MN_1}-(I_{H_1}-A^\dag_{MN_1}A)N_1^{-1}LX_2.\end{equation}
It follows from (\ref{eqn:4-}), (\ref{eqn:7-}) and (\ref{eqn:defn of
S}) that
\begin{equation}\label{eqn:8-}
(I_{H_2}-C^\dag_{MN_3}C)N_3^{-1}L^*A^\dag_{MN_1}+(I_{H_2}-C^\dag_{MN_3}C)N_3^{-1}SX_2=0.\end{equation}
By (\ref{eqn:6-}) we have
\begin{eqnarray}\label{eqn:9-}&&(I_{H_2}-C^\dag_{MN_3}C)N_3^{-1}SX_2=(I_{H_2}-C^\dag_{MN_3}C)N_3^{-1}SC^\dag_{MN_3}\nonumber\\
&&\hspace{12em}+(I_{H_2}-C^\dag_{MN_3}C)N_3^{-1}S(I_{H_2}-C^\dag_{MN_3}C)Y_2.\end{eqnarray}
So, if we let $N_3=S$, then by the above equality we get
\begin{equation}\label{eqn:10-}(I_{H_2}-C^\dag_{MS}C)X_2=(I_{H_2}-C^\dag_{MS}C)Y_2.\end{equation}
The expression for $U$ given by (\ref{eqn:defn of X2}) follows from
(\ref{eqn:6-}), (\ref{eqn:10-}) and (\ref{eqn:8-}) by letting
$N_3=S$. The conclusion then follows from (\ref{eqn:7-}).
\end{proof}

Theorem \ref{thm:particular representation} below was proved in
\cite{chen,miao1,wang2} for matrices by using different methods. In
the context of Hilbert $C^*$-module operators, we can give a general
proof as follows:
\begin{thm} \label{thm:particular representation} Under the conditions of Theorem \ref{thm:first main result} we have
\begin{equation}\label{equ:new expression of X1 and X2}(A,B)^\dag_{MN}=\left(\begin{array}{ccc} A^\dag_{MN_1}-\Big(D+(I_{H_1}-A^\dag_{MN_1}A)N_1^{-1}L\Big)\widetilde{U}\\
\widetilde{U}\end{array}\right),\end{equation} where
\begin{eqnarray}&&\label{eqn:defn of
D}D=A^\dag_{MN_1}B\in {\cal L}(H_2,H_1),\\
&&\label{eqn:defn of S
tilde}\widetilde{S}=N_2-L^*(I_{H_1}-A^\dag_{MN_1}A)N_1^{-1}L+D^*N_1D-D^*L-L^*D\in
{\cal L}(H_2),\\&& \label{eqn:defn of X2-}
\widetilde{U}=C^\dag_{M\widetilde{S}}+(I_{H_2}-C^\dag_{M\widetilde{S}}C)(\widetilde{S})^{-1}(D^*N_1-L^*)A^\dag_{MN_1}\in
{\cal L}(H_3,H_2).\hspace{2.5em}{}
\end{eqnarray}
\end{thm}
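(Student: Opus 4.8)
The plan is to reduce the statement to Theorem~\ref{thm:first main result} by realizing $(A,B)$ as a right multiple of $(A,C)$ by an invertible block operator. Put $D=A^\dag_{MN_1}B$ and
$$G=\left(\begin{array}{cc} I_{H_1} & D\\ 0 & I_{H_2}\end{array}\right),\qquad G^{-1}=\left(\begin{array}{cc} I_{H_1} & -D\\ 0 & I_{H_2}\end{array}\right)\in {\cal L}(H_1\oplus H_2).$$
Since $C=(I_{H_3}-AA^\dag_{MN_1})B=B-AD$, a block multiplication gives $(A,C)G=(A,\,AD+C)=(A,B)$. As $G$ is invertible, ${\cal R}(A,B)={\cal R}\big((A,C)G\big)={\cal R}(A,C)$ is closed, so $(A,B)^\dag_{MN}$ exists.

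First I would establish a general transformation rule: for $T\in {\cal L}(H,K)$ with closed range, positive definite $M\in{\cal L}(K)$ and $N\in{\cal L}(H)$, and any invertible $G\in{\cal L}(H)$,
$$(TG)^\dag_{MN}=G^{-1}\,T^\dag_{M,\,N''},\qquad N''=(G^{-1})^*NG^{-1},$$
where $N''$ is again positive definite. To prove this, set $Z=T^\dag_{M,N''}$ and $X=G^{-1}Z$, and verify the two conditions of Proposition~\ref{prop:m-p inverse***} for $TG$ with weights $M,N$: from $T^*MTZ=T^*M$ one gets $(TG)^*M(TG)X=G^*T^*MTZ=G^*T^*M=(TG)^*M$, while $NX=G^*N''Z$ yields ${\cal R}(NX)=G^*{\cal R}(N''Z)\subseteq G^*{\cal R}(T^*)={\cal R}\big((TG)^*\big)$. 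Uniqueness in Proposition~\ref{prop:m-p inverse***} then gives the rule.

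Next I would apply this with $T=(A,C)$ and the $G$ above, computing the transformed weight blockwise:
$$N''=(G^{-1})^*NG^{-1}=\left(\begin{array}{cc} N_1 & L-N_1D\\ L^*-D^*N_1 & N_2+D^*N_1D-D^*L-L^*D\end{array}\right).$$
Crucially, the $(1,1)$-block of $N''$ is again $N_1$, so $A^\dag_{MN_1}$ and the idempotent $I_{H_3}-AA^\dag_{MN_1}$ are unchanged, and since $(I_{H_3}-AA^\dag_{MN_1})C=C$, the operator $C$ is still the one associated to the weight $N''$ through (\ref{equ:defn of C}). Hence Theorem~\ref{thm:first main result} applies to $(A,C)^\dag_{M,N''}$ and produces data $S'',U''$ obtained from (\ref{eqn:defn of S}) and (\ref{eqn:defn of X2}) by replacing $L,N_2$ with $L''=L-N_1D$ and $N_2''=N_2+D^*N_1D-D^*L-L^*D$; in particular $S''$ is positive definite by the same computation as in Theorem~\ref{thm:first main result}.

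The heart of the argument is to show that these transformed data collapse to $\widetilde S$ and $\widetilde U$. Writing $P=A^\dag_{MN_1}A$ and $Q=I_{H_1}-P$, the identity $A^\dag_{MN_1}AA^\dag_{MN_1}=A^\dag_{MN_1}$ gives $QD=0$, and the $N_1$-self-adjointness of $P$ (the last relation in (\ref{equ:defn of WPR inverse}) with weight $N_1$) gives $N_1QN_1^{-1}=Q^*$. These yield $QN_1^{-1}L''=QN_1^{-1}L$ and $D^*Q^*L=0$, whence $S''=N_2''-(L'')^*QN_1^{-1}L''=N_2''-L^*QN_1^{-1}L=\widetilde S$ and consequently $U''=\widetilde U$. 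Finally, left-multiplying the formula of Theorem~\ref{thm:first main result} for $(A,C)^\dag_{M,N''}$ by $G^{-1}$ and again using $QN_1^{-1}L''=QN_1^{-1}L$ reproduces (\ref{equ:new expression of X1 and X2}). I expect the main obstacle to be purely bookkeeping: confirming that the $(1,1)$-block of $N''$ equals $N_1$ (so that $C$ and $A^\dag_{MN_1}$ survive the change of weight) and carrying the cancellations $QD=0$, $D^*Q^*=0$ through the expressions for $S''$ and $U''$; all the conceptual content sits in the factorization $(A,B)=(A,C)G$ and the transformation rule above.
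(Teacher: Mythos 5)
Your proposal is correct and follows essentially the same route as the paper: the paper also factors $(A,B)=(A,C)T^{-1}$ with $T=G^{-1}$, uses Proposition~\ref{prop:m-p inverse***} to show $(A,B)^\dag_{MN}=T\,(A,C)^\dag_{M\widetilde N}$ with $\widetilde N=T^*NT$ (your $N''$), applies Theorem~\ref{thm:first main result} with $L, N_2$ replaced by $L-N_1D$ and $N_2-D^*L-L^*D+D^*N_1D$, and simplifies via the same cancellations $(I_{H_1}-A^\dag_{MN_1}A)D=0$ and $D^*N_1(I_{H_1}-A^\dag_{MN_1}A)N_1^{-1}=0$. The only difference is presentational: you isolate the change-of-weight identity $\big((A,C)G\big)^\dag_{MN}=G^{-1}(A,C)^\dag_{M,N''}$ as a standalone general rule, while the paper verifies it inline for this particular $T$.
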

\begin{proof} Let $T=\Big(\begin{array}{ccc} I_{H_1} & -D\\
0 & I_{H_2}\end{array}\Big)\in {\cal L}(H_1\oplus H_2)$. Then $T$ is invertible with $T^{-1}=\Big(\begin{array}{ccc} I_{H_1} & D\\
0 & I_{H_2}\end{array}\Big)$. In view of (\ref{equ:defn of C}) and
(\ref{eqn:defn of D}), we have
\begin{equation}\label{equ:ab to ac}(A,B)\,T=(A,C),\end{equation}
which means that ${\cal R}(A,B)={\cal R}(A,C)$ is closed, so
$(A,B)^\dag_{MN}$ exists. Furthermore, by (\ref{equ:unique two2})
and (\ref{equ:ab to ac}) we know that $(A,B)^\dag_{MN}$ is the
unique solution
$\widetilde{X}=\binom{\widetilde{X_1}}{\widetilde{X_2}}\in {\cal
L}(H_3, H_1\oplus H_2)$ to the equation
\begin{eqnarray}&&(A,C)^*M(A,C)T^{-1}\widetilde{X}=(A,C)^*M,\\
&&R\big(T^*NT\cdot T^{-1}\widetilde{X}\big)\subseteq
R\Big((A,C)^*\Big).\end{eqnarray} It follows from (\ref{equ:unique
two2}) that $T^{-1}\widetilde{X}=(A,C)^\dag_{M\widetilde{N}}$, where
\begin{equation}\widetilde{N}=T^*NT=\left(\begin{array}{ccc} N_1 & L-N_1D\\
L^*-D^*N_1 & N_2-D^*L-L^*D+D^*N_1D\end{array}\right).\end{equation}
By the definition of $D$ we get $(I_{H_1}-A^\dag_{MN_1} A)D=0$ and
\begin{eqnarray*}&&D^*N_1(I_{H_1}-A^\dag_{MN_1}A)N_1^{-1}=
B^*(A^\dag_{MN_1})^*N_1(I_{H_1}-A^\dag_{MN_1}A)N_1^{-1}\\
&&=B^*(A^\dag_{MN_1})^*(I_{H_1}-A^\dag_{MN_1}A)^*=0.\end{eqnarray*}
In view of (\ref{eqn:defn of S}), if we replace $N_2, L$ with
$N_2-D^*L-L^*D+D^*N_1D$ and $L-N_1D$ respectively, and define
\begin{eqnarray*}
\widetilde{S}&=&(N_2-D^*L-L^*D+D^*N_1D)-(L-N_1D)^*(I-A^\dag_{MN_1}A)N_1^{-1}(L-N_1D)\\
&=&N_2-L^*(I_{H_1}-A^\dag_{MN_1}A)N_1^{-1}L+D^*N_1D-D^*L-L^*D,\end{eqnarray*}
then by Theorem \ref{thm:first main result} we conclude that
$T^{-1}\widetilde{X}=\binom{\,V_1\,}{V_2}$ with
\begin{eqnarray}\label{eqn:new expression of X2}V_2&=&C^\dag_{M\widetilde{S}}-(I_{H_2}-C^\dag_{M\widetilde{S}}C)(\widetilde{S})^{-1}(L-N_1D)^*A^\dag_{MN_1},\\
\label{eqn:new expression of
X1}V_1&=&A^\dag_{MN_1}-(I_{H_1}-A^\dag_{MN_1}A)N_1^{-1}(L-N_1D)V_2\nonumber\\
&=&A^\dag_{MN_1}-(I_{H_1}-A^\dag_{MN_1}A)N_1^{-1}LV_2.
\end{eqnarray}
As
$\binom{\widetilde{X_1}}{\widetilde{X_2}}=T\binom{\,V_1\,}{V_2}=\binom{V_1-DV_2}{V_2}$,
(\ref{eqn:defn of X2-}) and (\ref{equ:new expression of X1 and X2})
then follow from (\ref{eqn:new expression of X2}) and (\ref{eqn:new
expression of X1}).
\end{proof}

Now we are ready to  give a unified representation for
$(A,B)^\dag_{MN}$ in terms of $C^\dag_{MN_3}$, where $N_3\in {\cal
L}(H_2)$ can be an arbitrary positive definite operator.

\begin{thm}\label{thm:unified representation}\  Under the conditions of Theorem \ref{thm:first main result} we have
\begin{equation}\label{equ:w-m-p-formula2-c} (A,B)_{MN}^\dag
=\left(\begin{array}{c}
A_{MN_1}^\dag-\big(D+(I_{H_1}-A^\dag_{MN_1}A) N_1^{-1}L\big)V
\\ V
\end{array}\right), \end{equation} where $N_3\in {\cal L}(H_2)$ is arbitrary positive definite, $D$ and $\widetilde{S}$ are defined by
(\ref{eqn:defn of D}) and (\ref{eqn:defn of S tilde}) respectively,
and
\begin{eqnarray}\label{equ:defn of R widetilde K and
K}
&&R_{M;N_3,\widetilde{S}}=I_{H_2}+(I_{H_2}-C^\dag_{MN_3}C)N_3^{-1}(\widetilde{S}-N_3),\nonumber\\
\label{equ:defn
of V}&& V=R_{M;N_3,\widetilde{S}}^{-1}
\Big(C^\dag_{MN_3}+(I_{H_2}-C^\dag_{MN_3}C)N_3^{-1}(D^*N_1-L^*)A^\dag_{MN_1}\Big).
\end{eqnarray}
\end{thm}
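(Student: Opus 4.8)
The plan is to reduce the statement to the representation already obtained in Theorem~\ref{thm:particular representation}. The decisive observation is that formula~(\ref{equ:w-m-p-formula2-c}) has exactly the same first-component structure as~(\ref{equ:new expression of X1 and X2}), the only difference being that the operator $\widetilde{U}$ of~(\ref{eqn:defn of X2-}) is replaced by the operator $V$ of~(\ref{equ:defn of V}). Consequently it suffices to establish the single identity $V=\widetilde{U}$; once this is proved, the conclusion follows immediately by substituting into Theorem~\ref{thm:particular representation}.

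To prove $V=\widetilde{U}$, I would apply the change-of-weight machinery of Section~\ref{sec:relationship of w-m-p}, but to the operator $C$ rather than to $A$. Since ${\cal R}(C)$ is closed by hypothesis and $\widetilde{S}$ is positive definite (this was noted in the proof of Theorem~\ref{thm:particular representation}, where $\widetilde{S}$ is produced by the same construction that rendered $S$ of~(\ref{eqn:defn of S}) positive definite), both $C^\dag_{MN_3}$ and $C^\dag_{M\widetilde{S}}$ exist for the arbitrary positive definite $N_3\in {\cal L}(H_2)$. Comparing the definition of $R_{M;N_3,\widetilde{S}}$ in~(\ref{equ:defn of R widetilde K and K}) with the general formula~(\ref{equ:defn of R M N1 N2}), one sees that it is precisely the operator obtained under the substitution $(A,N_1,N_2)\mapsto(C,N_3,\widetilde{S})$, and hence is invertible by Lemma~\ref{lem:R}.

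The two key substitutions then come directly from Section~\ref{sec:relationship of w-m-p}. First, Lemma~\ref{lem:result-2} applied to $C$ with weights $N_3$ and $\widetilde{S}$ yields $C^\dag_{M\widetilde{S}}=R_{M;N_3,\widetilde{S}}^{-1}C^\dag_{MN_3}$. Second, the Remark following that lemma, namely~(\ref{equ:R -1 equal}), gives
\begin{equation*}(I_{H_2}-C^\dag_{M\widetilde{S}}C)(\widetilde{S})^{-1} = R_{M;N_3,\widetilde{S}}^{-1}(I_{H_2}-C^\dag_{MN_3}C)N_3^{-1}.\end{equation*}
Feeding these two identities into the expression~(\ref{eqn:defn of X2-}) for $\widetilde{U}$ and then factoring the common left factor $R_{M;N_3,\widetilde{S}}^{-1}$ out of both resulting terms reproduces exactly the right-hand side of~(\ref{equ:defn of V}); that is, $\widetilde{U}=V$, as required.

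I do not expect any genuine obstacle, since the result is at heart a change-of-weight identity for $C$ layered on top of an already proved formula. The points requiring care are purely bookkeeping: verifying that $\widetilde{S}$ is positive definite so that $C^\dag_{M\widetilde{S}}$ is meaningful and Lemma~\ref{lem:result-2} is applicable; checking that the defining expression~(\ref{equ:defn of R widetilde K and K}) for $R_{M;N_3,\widetilde{S}}$ is literally the instance of~(\ref{equ:defn of R M N1 N2}) under $(A,N_1,N_2)\mapsto(C,N_3,\widetilde{S})$, so that both Lemma~\ref{lem:result-2} and~(\ref{equ:R -1 equal}) transfer verbatim; and confirming that the sign and the factor $(D^*N_1-L^*)A^\dag_{MN_1}$ agree between~(\ref{eqn:defn of X2-}) and~(\ref{equ:defn of V}) after the common factor is extracted.
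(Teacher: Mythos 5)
Your proposal is correct and is essentially identical to the paper's own proof: the paper likewise applies Lemma~\ref{lem:result-2} and identity~(\ref{equ:R -1 equal}) to $C$ with weights $N_3$ and $\widetilde{S}$, substitutes into~(\ref{eqn:defn of X2-}), and concludes from Theorem~\ref{thm:particular representation}. Your additional bookkeeping remarks (positive definiteness of $\widetilde{S}$, invertibility of $R_{M;N_3,\widetilde{S}}$ via Lemma~\ref{lem:R}) are accurate and only make explicit what the paper leaves implicit.
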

\begin{proof} By Lemma \ref{lem:result-2} we have
$C_{M\widetilde{S}}^\dag=R_{M;N_3,\widetilde{S}}^{-1}\,C_{MN_3}^\dag$.
Furthermore, by (\ref{equ:R -1 equal}) we can get
$$(I_{H_2}-C^\dag_{M\widetilde{S}}C)\widetilde{S}^{-1}=R_{M;N_3,\widetilde{S}}^{-1}\cdot(I_{H_2}-C^\dag_{MN_3}C)N_3^{-1}.$$
The conclusion then follows from (\ref{equ:new expression of X1 and
X2}) and (\ref{eqn:defn of X2-}).
\end{proof}

In the special case of the preceding theorem where $N_3=S(N)$, we
regain the main technique result of \cite{xu1} as follows:

\begin{thm} {\rm (cf.\,\cite[Theorem 5.1]{xu1})}\label{thm:w-m-p formula} \  Under the conditions of Theorem \ref{thm:first main result} we have
\begin{equation}\label{equ:w-m-p-formula2} (A,B)_{MN}^\dag=\left(\begin{array}{c} A_{MN_1}^\dag-\big(\Sigma+N_1^{-1}L
\big)\Omega
\\ \Omega
\end{array}\right), \end{equation} where $C$ and $D$ are defined by (\ref{equ:defn of
C}) and (\ref{eqn:defn of D}) respectively, and
\begin{eqnarray}&& \label{eqn:defn of Sigma}\Sigma=A_{MN_1}^\dag
(B-AN_1^{-1}L)=D-A^\dag_{MN_1}AN_1^{-1}L,\\
&&\label{eqn:defn of Y} Y=\big(I- C_{MS(N)}^\dag C \big) S(N)^{-1},
\\ \label{eqn:defn of Omega}&&\label{eqn:defn of Omega} \Omega=\big(I+Y\Sigma^* N_1 \Sigma
\big)^{-1}\Big(Y\Sigma^* N_1\cdot A_{MN_1}^\dag+C_{MS(N)}^\dag\Big).
\end{eqnarray}
\end{thm}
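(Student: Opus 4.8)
The plan is to specialize the unified representation of Theorem~\ref{thm:unified representation} to the choice $N_3=S(N)$ and to verify that the resulting operator $V$ coincides with $\Omega$. The leading block of (\ref{equ:w-m-p-formula2-c}) already matches that of (\ref{equ:w-m-p-formula2}), since a one-line computation gives
$$\Sigma+N_1^{-1}L=\big(D-A^\dag_{MN_1}AN_1^{-1}L\big)+N_1^{-1}L=D+(I_{H_1}-A^\dag_{MN_1}A)N_1^{-1}L,$$
so once $V=\Omega$ is established under $N_3=S(N)$ the two formulas are identical.

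The workhorse throughout is the fact that $A^\dag_{MN_1}A$ is a projection on the weighted space $(H_1)_{N_1}$, hence $N_1$-self-adjoint; in terms of ordinary adjoints this reads $(A^\dag_{MN_1}A)^*N_1=N_1A^\dag_{MN_1}A$. Combined with the idempotence relations $A^\dag_{MN_1}A\,D=D$ (because $D=A^\dag_{MN_1}B$) and $A^\dag_{MN_1}A\,A^\dag_{MN_1}=A^\dag_{MN_1}$, this lets me rewrite the starred quantities appearing in $\Omega$. First I would compute
$$\Sigma^*N_1=D^*N_1-L^*N_1^{-1}(A^\dag_{MN_1}A)^*N_1=D^*N_1-L^*A^\dag_{MN_1}A,$$
and then, multiplying by $A^\dag_{MN_1}$ on the right and using $A^\dag_{MN_1}A\,A^\dag_{MN_1}=A^\dag_{MN_1}$, obtain
$$\Sigma^*N_1\,A^\dag_{MN_1}=(D^*N_1-L^*)A^\dag_{MN_1}.$$
This identifies the ``vector'' part of $\Omega$ with that of $V$ after the substitution $N_3=S(N)$, because $Y=(I-C^\dag_{MS(N)}C)S(N)^{-1}$ carries exactly the projection-times-$S(N)^{-1}$ factor that appears in (\ref{equ:defn of V}).

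The remaining task, and the only genuinely computational one, is to match the two inverses, i.e.\ to show $R_{M;S(N),\widetilde{S}}=I_{H_2}+Y\Sigma^*N_1\Sigma$; for this it suffices to prove the block identity $\widetilde{S}-S(N)=\Sigma^*N_1\Sigma$, the common factor $(I_{H_2}-C^\dag_{MS(N)}C)S(N)^{-1}$ then pre-multiplying both sides. Expanding $\Sigma^*N_1\Sigma=(D^*N_1-L^*A^\dag_{MN_1}A)(D-A^\dag_{MN_1}AN_1^{-1}L)$ and simplifying the four resulting terms with the three relations above (in particular $D^*N_1A^\dag_{MN_1}AN_1^{-1}L=D^*L$ via $N_1A^\dag_{MN_1}AN_1^{-1}=(A^\dag_{MN_1}A)^*$ and $A^\dag_{MN_1}A\,D=D$, and $L^*A^\dag_{MN_1}A\,D=L^*D$) yields
$$\Sigma^*N_1\Sigma=D^*N_1D-D^*L-L^*D+L^*A^\dag_{MN_1}AN_1^{-1}L.$$
On the other hand, from (\ref{eqn:defn of S tilde}) together with $S(N)=N_2-L^*N_1^{-1}L$, a direct subtraction gives $\widetilde{S}-S(N)=D^*N_1D-D^*L-L^*D+L^*A^\dag_{MN_1}AN_1^{-1}L$, which is exactly the same expression. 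Substituting $\widetilde{S}-S(N)=\Sigma^*N_1\Sigma$ into the definition of $R_{M;N_3,\widetilde{S}}$ with $N_3=S(N)$ converts (\ref{equ:defn of V}) into the expression for $\Omega$ in (\ref{eqn:defn of Omega}), and hence $V=\Omega$, completing the argument. I expect the main obstacle to be purely bookkeeping: keeping the ordinary adjoints and the $N_1$-weighted adjoints straight when transferring the self-adjointness of the projection $A^\dag_{MN_1}A$ through the conjugations. The identity $(A^\dag_{MN_1}A)^*N_1=N_1A^\dag_{MN_1}A$ is invoked repeatedly and in both directions, and a misplacement of $N_1$ there would destroy the cancellations that produce the clean final forms.
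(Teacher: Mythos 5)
Your proposal is correct and follows essentially the same route as the paper's own proof: specialize Theorem~\ref{thm:unified representation} to $N_3=S(N)$, establish $\Sigma^*N_1=D^*N_1-L^*A^\dag_{MN_1}A$, hence $\Sigma^*N_1A^\dag_{MN_1}=(D^*N_1-L^*)A^\dag_{MN_1}$, and verify $\widetilde{S}-S(N)=\Sigma^*N_1\Sigma$ so that $R_{M;S(N),\widetilde{S}}=I_{H_2}+Y\Sigma^*N_1\Sigma$, which turns $V$ into $\Omega$. All the key identities you invoke, including the $N_1$-self-adjointness relation $(A^\dag_{MN_1}A)^*N_1=N_1A^\dag_{MN_1}A$ and the cancellations $A^\dag_{MN_1}AD=D$, $L^*A^\dag_{MN_1}AD=L^*D$, are exactly those used in the paper.
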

\begin{proof} Let $\widetilde{S}$ be given  by (\ref{eqn:defn of S
tilde}) and define \begin{equation}\label{equ:defn of
Delta}\Delta=\widetilde{S}-S(N)=L^*A^\dag_{MN_1}AN_1^{-1}L+D^*N_1D-D^*L-L^*D.\end{equation}
By definition  we have
\begin{equation}\label{equ:expression of Sigma*}\Sigma^*=D^*-L^*A^\dag_{MN_1}AN_1^{-1},
\ \mbox{so}\ \Sigma^*N_1=D^*N_1-L^*A^\dag_{MN_1}A.\end{equation} It
follows that
$\Sigma^*N_1A^\dag_{MN_1}=D^*N_1A^\dag_{MN_1}-L^*A^\dag_{MN_1}.$
Therefore,
\begin{equation}\label{equ:S(N)-technique-1}
(I-C^\dag_{MS(N)}C)S(N)^{-1}(D^*N_1-L^*)A^\dag_{MN_1}=Y\Sigma^*N_1A^\dag_{MN_1}.
\end{equation}

By the definition of $D$, we have $A^\dag_{MN_1}AD=D$, so by
(\ref{equ:expression of Sigma*}) and (\ref{eqn:defn of Sigma}) we
have
\begin{eqnarray}&&\Sigma^*N_1\Sigma=(D^*N_1-L^*A^\dag_{MN_1}A)(D-A^\dag_{MN_1}AN_1^{-1}L)\nonumber\\
&&=D^*N_1D-D^*(A^\dag_{MN_1}A)^*L-L^*A^\dag_{MN_1}AD+L^*A^\dag_{MN_1}AN_1^{-1}L\nonumber\\
\label{eqn:equal to
Delta}&&=D^*N_1D-D^*L-L^*D+L^*A^\dag_{MN_1}AN_1^{-1}L=\Delta.
\end{eqnarray}It follows that
\begin{equation}\label{equ:S(N)-technique-2}R_{M;S(N),\widetilde{S}}=I+Y\Delta=I+Y\Sigma^*N_1\Sigma.\end{equation}
Finally, by the definitions of $D$ and $\Sigma$ we get
\begin{equation}\label{equ:S(N)-technique-3}D+(I-A^\dag_{MN_1}A) N_1^{-1}L=\Sigma+N_1^{-1}L.\end{equation}
Expression (\ref{eqn:defn of Omega}) for $\Omega$ follows from
(\ref{equ:defn of V}), (\ref{equ:S(N)-technique-2}) and
(\ref{equ:S(N)-technique-1}). Formula (\ref{equ:w-m-p-formula2}) for
$(A,B)_{MN}^\dag$ then follows from (\ref{equ:w-m-p-formula2-c}) and
(\ref{eqn:defn of Sigma}).
\end{proof}

\section{Representations for weighted Moore-Penrose inverses of $2\times 2$ partitioned
operators}\label{sec:weighted MP-inverse of 2x2 matrices}

\subsection{Non weighted case}


Following the line initiated in \cite{miao2}, in this section we
study the representations for the (non-weighted) Moore-Penrose inverse
$A^\dag$ of a general $2\times 2$ partitioned operator matrix
\begin{equation}\label{equ:defn of A}A=\left(
                             \begin{array}{cc}
                               A_{11} & A_{12} \\
                               A_{21} & A_{22} \\
                             \end{array}
                           \right)
\in {\cal L}(H_1\oplus H_2, K_1\oplus K_2),\end{equation} where
$H_1,H_2, K_1$ and $K_2$ are four Hilbert $\mathfrak{A}$-modules,
$A_{11}\in {\cal L}(H_1,K_1)$, $A_{12}\in {\cal L}(H_2, K_1)$,
$A_{21}\in {\cal L}(H_1,K_2)$ and $A_{22}\in {\cal L}(H_2,K_2)$. In
the case when $A_{11}$ has a closed range, let $S(A)$ be the Schur complement
of $A$ defined by
\begin{equation}\label{eqn:defn of S(A)}
S(A)=A_{22}-A_{21}A_{11}^\dag A_{12}\in {\cal
L}(H_2,K_2).\end{equation}

\subsubsection{Special case}

\begin{lem}\label{lem:F_i(A)+} Suppose that $A_{11}$ has a closed range. Then
both $F_1(A)^\dag $ and  $F_2(A)^\dag $ exist, where
\begin{eqnarray}&&\label{eqn:defn of F_1(A)}F_1(A)=\binom{-A_{11}^\dag A_{12}}{I_{H_2}}\in {\cal L}(H_2, H_1\oplus H_2), \\
&&\label{eqn:defn of F_2(A)}F_2(A)=(-A_{21}A_{11}^\dag, I_{K_2})\in
{\cal L}(K_1\oplus K_2, K_2).\end{eqnarray} Furthermore, the
following equalities hold:
\begin{enumerate}
\item[{\rm (i)}] $F_1(A)^\dag \cdot \left(
                                                             \begin{array}{cc}
                                                               A_{11}^\dag A_{11} & A_{11}^\dag A_{12}\\
                                                               0 & 0 \\
                                                             \end{array}
                                                           \right)=F_1(A)^\dag-(0,I_{H_2})$;
\item[{\rm (ii)}] $\left(\begin{array}{cc}
                      A_{11}A_{11}^\dag & 0 \\
                      A_{21}A_{11}^\dag & 0 \\
                    \end{array}
                  \right)\cdot
F_2(A)^\dag=F_2(A)^\dag-\left(
                          \begin{array}{c}
                            0 \\
                            I_{K_2} \\
                          \end{array}
                        \right)$.
\end{enumerate}
\end{lem}
\begin{proof} By definition we have $F_2(A)F_2(A)^*=I_{K_2}+\big(A_{21}A_{11}^\dag\big)\,\big(A_{21}A_{11}^\dag\big)^*$, which is invertible,
hence by Proposition~\ref{prop:trivial-3} we have
\begin{equation}\label{equ:S_2(A)+}F_2(A)^\dag=F_2(A)^*\cdot\big(F_2(A)F_2(A)^*\big)^{-1}.\end{equation} It follows from
(\ref{eqn:defn of F_2(A)}) and (\ref{equ:S_2(A)+}) that
\begin{eqnarray*}&&\left(
                    \begin{array}{cc}
                      A_{11}A_{11}^\dag & 0 \\
                      A_{21}A_{11}^\dag & 0 \\
                    \end{array}
                  \right)\,F_2(A)^\dag
=\left(
     \begin{array}{cc}
       -(A_{21}A_{11}^\dag)^* \\
       -(A_{21}A_{11}^\dag)(A_{21}A_{11}^\dag)^* \\
     \end{array}
   \right)\big(F_2(A)F_2(A)^*\big)^{-1}\\
   &&=\left[F_2(A)^*-\left(
                                                       \begin{array}{c}
                                                         0 \\
                                                         F_2(A)F_2(A)^* \\
                                                       \end{array}
                                                     \right)\right]\big(F_2(A)F_2(A)^*\big)^{-1}=F_2(A)^\dag-\left(
                          \begin{array}{c}
                            0 \\
                            I_{K_2} \\
                          \end{array}
                        \right).
\end{eqnarray*}
The proof of (i) is similar.
\end{proof}

\begin{thm}\label{thm:2 x 2 first result} {\rm (cf.\,\cite[Theorem 2]{miao2})}\ Suppose
that  both $A_{11}$ and $S(A)$ have closed ranges, and
\begin{equation}\label{equ:condition of two ranges}
(I_{K_1}-A_{11}A_{11}^\dag)A_{12}=0,\  A_{21}(I_{H_1}-A_{11}^\dag
A_{11})=0.\end{equation} Then
\begin{equation}\label{equ:expression of A+-special case}A^\dag
=X_L(A)\,{\rm
diag}(A_{11}^\dag,0)\,X_R(A)+F_1(A)\,S(A)^g\,F_2(A),\end{equation}
where $F_1(A)$ and $F_2(A)$ are defined  by (\ref{eqn:defn of
F_1(A)}) and (\ref{eqn:defn of F_2(A)}) respectively, and
\begin{eqnarray}
&&\label{eqn:defn of
S(A)^g}S(A)^g=S(A)^\dag_{\left[F_2(A)F_2(A)^*\right]^{-1},F_1(A)^*F_1(A)}\in
{\cal L}(K_2,H_2),\\
&&\label{eqn:defn of X_L(A)}X_L(A)=I_{H_1\oplus H_2}-F_1(A)
\big[I_{H_2}-S(A)^g S(A)\big]F_1(A)^\dag\in {\cal
L}(H_1\oplus H_2),\\
&&\label{eqn:defn of X_R}X_R(A)=I_{K_1\oplus K_2}-F_2(A)^\dag
\big[I_{K_2}-S(A)S(A)^g\big]F_2(A)\in {\cal L}(K_1\oplus K_2).
\end{eqnarray}
\end{thm}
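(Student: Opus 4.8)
The plan is to verify directly that the right-hand side of (\ref{equ:expression of A+-special case}) satisfies the four defining equations (\ref{equ:defn of WPR inverse}) with $M=I_{K_1\oplus K_2}$ and $N=I_{H_1\oplus H_2}$, exploiting the range conditions (\ref{equ:condition of two ranges}) to factor $A$ through a block-triangular form. First I would rewrite $A$ using (\ref{equ:condition of two ranges}): since $(I_{K_1}-A_{11}A_{11}^\dag)A_{12}=0$ gives $A_{11}A_{11}^\dag A_{12}=A_{12}$, and $A_{21}(I_{H_1}-A_{11}^\dag A_{11})=0$ gives $A_{21}A_{11}^\dag A_{11}=A_{21}$, the blocks $A_{12}$ and $A_{21}$ are forced to live in the appropriate ranges. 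The natural move is to display the factorization
\begin{equation*}
A=\left(\begin{array}{cc} I_{K_1} & 0 \\ A_{21}A_{11}^\dag & I_{K_2}\end{array}\right)\left(\begin{array}{cc} A_{11} & A_{12} \\ 0 & S(A)\end{array}\right)
\end{equation*}
where the middle factor's $(2,2)$ entry is exactly the Schur complement $S(A)$ of (\ref{eqn:defn of S(A)}); one checks the $(2,1)$ block vanishes precisely because of the second equation in (\ref{equ:condition of two ranges}), and the $(2,2)$ block is $A_{22}-A_{21}A_{11}^\dag A_{12}=S(A)$.

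The key observation is that $F_1(A)$ and $F_2(A)$ of (\ref{eqn:defn of F_1(A)})--(\ref{eqn:defn of F_2(A)}) encode exactly the columns and rows that produce $S(A)$: indeed $A\,F_1(A)=\binom{0}{S(A)}$ and $F_2(A)\,A=(0,S(A))$ after using (\ref{equ:condition of two ranges}). Next I would interpret $S(A)^g$ in (\ref{eqn:defn of S(A)^g}) as the weighted Moore-Penrose inverse of $S(A)$ relative to the weights induced by $F_1$ and $F_2$; the weights $[F_2(A)F_2(A)^*]^{-1}$ and $F_1(A)^*F_1(A)$ are positive definite by Lemma~\ref{lem:F_i(A)+} (the first is the inverse of the invertible operator $F_2(A)F_2(A)^*$, the second is clearly positive and invertible since $F_1(A)^*F_1(A)=I_{H_2}+(A_{11}^\dag A_{12})^*(A_{11}^\dag A_{12})$). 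With these identifications, the projections $I-S(A)^g S(A)$ and $I-S(A)S(A)^g$ in (\ref{eqn:defn of X_L(A)})--(\ref{eqn:defn of X_R}) become, via (\ref{equ:defn of WPR inverse}), the weighted null-space and cokernel projectors of $S(A)$.

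The verification then splits into two pieces matching the two summands of (\ref{equ:expression of A+-special case}). For the first summand ${\rm diag}(A_{11}^\dag,0)$ conjugated by $X_L(A)$ and $X_R(A)$, I would use the standard identities for $A_{11}^\dag$ together with the two equalities in Lemma~\ref{lem:F_i(A)+}, which are designed to absorb the off-diagonal products $A_{11}^\dag A_{12}$ and $A_{21}A_{11}^\dag$ coming from $F_1(A)^\dag$ and $F_2(A)^\dag$. For the second summand $F_1(A)\,S(A)^g\,F_2(A)$, I would compute $A\cdot F_1(A)S(A)^g F_2(A)=\binom{0}{S(A)}S(A)^g F_2(A)$ and check the idempotency and self-adjointness conditions using the weighted Moore-Penrose identities for $S(A)^g$. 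The main obstacle will be the cross terms: when forming $AXA$ and checking $AXA=A$, the product of the two summands must cancel exactly, and this cancellation is precisely where the projector formulas (\ref{eqn:defn of X_L(A)}) and (\ref{eqn:defn of X_R}) and Lemma~\ref{lem:F_i(A)+} interlock; confirming that the weighted symmetry conditions $(AX)^*=AX$ and $(XA)^*=XA$ hold (rather than merely the weighted versions) requires carefully tracking that the chosen weights on $S(A)$ are exactly those that make the ordinary, unweighted self-adjointness of $AX$ and $XA$ come out, which I expect to be the most delicate bookkeeping in the argument.
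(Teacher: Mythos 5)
Your plan is correct and follows essentially the same route as the paper's own proof: both verify the Moore--Penrose equations directly for the candidate $Z$, using the identities $AF_1(A)=\binom{0}{S(A)}$ and $F_2(A)A=(0,S(A))$ derived from (\ref{equ:condition of two ranges}), the two equalities of Lemma~\ref{lem:F_i(A)+} to absorb the off-diagonal terms, and the weighted Moore--Penrose identities for $S(A)^g$ (in the paper, the relation $\lambda_2(A)^*=\left(F_2(A)F_2(A)^*\right)^{-1}\lambda_2(A)\left(F_2(A)F_2(A)^*\right)$) to obtain the unweighted self-adjointness of $AZ$ and $ZA$. The block-triangular factorization of $A$ you display is a harmless repackaging of the same conditions and does not change the argument.
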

\begin{proof} It follows from (\ref{equ:defn of A}),  (\ref{eqn:defn of F_1(A)}), (\ref{eqn:defn of F_2(A)}) and
(\ref{equ:condition of two ranges}) that
\begin{equation}\label{equ:expression of AX}AF_1(A)=\binom{0}{S(A)}\ \mbox{and}\ \ F_2(A)A=\big(0,S(A)\big),\end{equation}
which implies that
\begin{equation}\label{equ:two items =A}AX_L(A)=A \ \mbox{and}\ X_R(A)A=A.\end{equation}
To simplify the notation, let
\begin{eqnarray}&&\lambda_1(A)=I_{H_2}-S(A)^gS(A)\
\mbox{and}\ \lambda_2(A)=I_{K_2}-S(A)S(A)^g.\end{eqnarray} Then by
(ii) of Lemma \ref{lem:F_i(A)+} we have
\begin{eqnarray}\begin{split}&A\,{\rm diag}(A_{11}^\dag,0)\,X_R(A)=\left(
     \begin{array}{cc}
       A_{11}A_{11}^\dag & 0 \\
       A_{21}A_{11}^\dag & 0 \\
     \end{array}
   \right)X_R(A)\\
&=\left(
     \begin{array}{cc}
       A_{11}A_{11}^\dag & 0 \\
       A_{21}A_{11}^\dag & 0 \\
     \end{array}
   \right)-\left(
     \begin{array}{cc}
       A_{11}A_{11}^\dag & 0 \\
       A_{21}A_{11}^\dag & 0 \\
     \end{array}
   \right)F_2(A)^\dag \lambda_2(A)F_2(A)\\
   &\label{equ:first computation of AA+}=\left(
     \begin{array}{cc}
       A_{11}A_{11}^\dag & 0 \\
       A_{21}A_{11}^\dag & 0 \\
     \end{array}
   \right)-F_2(A)^\dag\lambda_2(A)F_2(A)+\binom{0}{I_{K_2}}\lambda_2(A)F_2(A).\end{split}
\end{eqnarray}
Furthermore, by the first equality in (\ref{equ:expression of AX})
we get
\begin{eqnarray}\label{equ:second computation of AA+}\begin{split}&AF_1(A)S(A)^gF_2(A)=\binom{0}{I_{K_2}}S(A)S(A)^gF_2(A)\\
&=\binom{0}{I_{K_2}}\big(I_{K_2}-\lambda_2(A)\big)F_2(A)=\binom{0}{I_{K_2}}F_2(A)-\binom{0}{I_{K_2}}\lambda_2(A)F_2(A)\\
&=\left(\begin{array}{cc} 0 & 0
\\-A_{21}A_{11}^\dag & I_{K_2}\\ \end{array}
\right)-\binom{0}{I_{K_2}}\lambda_2(A)F_2(A).\end{split}
\end{eqnarray}

Now let $Z$ be the right side of (\ref{equ:expression of A+-special
case}). Then by the first equality in (\ref{equ:two items =A}),
(\ref{equ:first computation of AA+}) and (\ref{equ:second
computation of AA+}), we get
\begin{eqnarray}\label{eqn:AZ equals what}&&AZ={\rm diag}(A_{11}A_{11}^\dag,
   I_{K_2})-F_2(A)^*\big(F_2(A)F_2(A)^*\big)^{-1}\lambda_2(A)F_2(A),
\end{eqnarray}
which means that $(AZ)^*=AZ$, since by the definitions of $S(A)^g$
and $\lambda_2(A)$ we have
$$\lambda_2(A)^*=\left(F_2(A)F_2(A)^*\right)^{-1}
\lambda_2(A)\big(F_2(A) F_2(A)^*\big).$$ As
$\lambda_2(A)\big(0,S(A)\big)=0$, we may combine (\ref{eqn:AZ equals
what}) with the second equality in (\ref{equ:expression of AX})  to
get
$$AZA={\rm diag}(A_{11}A_{11}^\dag, I_{K_2})A=A.$$ Similarly, as
$F_1(A)^\dag=\big(F_1(A)^*F_1(A)\big)^{-1}F_1(A)^*$ and $${\rm
diag}(A_{11}^\dag A_{11},I_{H_2}\big)X_L(A)=X_L(A)-{\rm
diag}\big(I_{H_1}-A_{11}^\dag A_{11},0\big),$$ we can prove that
$$ZA={\rm diag}(A_{11}^\dag A_{11},
I_{H_2})-F_1(A)\lambda_1(A)\big(F_1(A)^*F_1(A)\big)^{-1}F_1(A)^*$$
with $(ZA)^*=ZA$ and $ZAZ=Z$, therefore $Z=A^\dag$.
\end{proof}

\begin{cor}\label{cor:representation MP-inverse of positive matrix} Let
$A=\Big(\begin{array}{ccc} A_{11} & A_{12}\\
A_{12}^* & A_{22}\end{array}\Big)\in {\cal L}(K_1\oplus K_2)$ be
positive, where $A_{ij}\in {\cal L}(K_j,K_i)$$(i,j=1,2)$. If both
${\cal R}(A_{11})$ and ${\cal R}\big(S(A)\big)$ are closed, then
\begin{equation}\label{equ:expression of A+-special case-}A^\dag=X_L(A)\,{\rm diag}(A_{11}^\dag,0)\,X_R(A)+F_1(A)\,S(A)^g\,F_1(A)^*,\end{equation}
where $F_1(A)$ is defined by (\ref{eqn:defn of F_1(A)}),  $S(A)^g$,
$X_L(A)$ and $X_R(A)$ are given respectively as (\ref{eqn:defn of
S(A)^g}), (\ref{eqn:defn of X_L(A)}) and (\ref{eqn:defn of X_R}) by
letting $F_2(A)$ be replaced with $F_1(A)^*$. In addition, a
$\{1,3\}$-inverse of $A$ can be given by
\begin{equation}\label{equ:1 3 inverse of A} A^{(1,3)}={\rm diag}(A^\dag_{11},0)\,X_R(A)+F_1(A)\,S(A)^g\,F_1(A)^*.
\end{equation}
\end{cor}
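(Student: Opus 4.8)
The plan is to obtain the corollary as a direct specialization of Theorem~\ref{thm:2 x 2 first result}, using the positivity of $A$ in two ways: to identify $F_2(A)$ with $F_1(A)^*$, and to guarantee the range hypotheses in (\ref{equ:condition of two ranges}) for free. Since $A$ is positive it is self-adjoint, so $A_{21}=A_{12}^*$ and its diagonal block $A_{11}$ is positive; in particular $A_{11}^\dag$ is self-adjoint, and therefore
$$F_1(A)^*=\big(-(A_{11}^\dag A_{12})^*,\,I_{K_2}\big)=\big(-A_{12}^*A_{11}^\dag,\,I_{K_2}\big)=\big(-A_{21}A_{11}^\dag,\,I_{K_2}\big)=F_2(A).$$
Once the hypotheses of Theorem~\ref{thm:2 x 2 first result} are verified, substituting $F_2(A)=F_1(A)^*$ into (\ref{equ:expression of A+-special case}) and into (\ref{eqn:defn of S(A)^g})--(\ref{eqn:defn of X_R}) yields exactly (\ref{equ:expression of A+-special case-}), with $S(A)^g$, $X_L(A)$ and $X_R(A)$ in their stated forms.

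The substantive step is to check that the two equalities in (\ref{equ:condition of two ranges}) hold automatically. As $A_{11}$ is self-adjoint with closed range, $P:=A_{11}A_{11}^\dag=A_{11}^\dag A_{11}$ is the orthogonal projection onto ${\cal R}(A_{11})$, and the two conditions collapse to the single inclusion ${\cal R}(A_{12})\subseteq {\cal R}(A_{11})$, i.e.\ $(I_{K_1}-P)A_{12}=0$. I would extract this from the operator inequality $A_{12}A_{12}^*\le \|A_{22}\|\,A_{11}$. Indeed, writing $c=\|A_{22}\|$, the matrix $\big(\begin{smallmatrix} A_{11} & A_{12}\\ A_{12}^* & cI_{K_2}\end{smallmatrix}\big)=A+{\rm diag}(0,\,cI_{K_2}-A_{22})$ is positive, and congruence by the invertible factor $\big(\begin{smallmatrix} I_{K_1} & -c^{-1}A_{12}\\ 0 & I_{K_2}\end{smallmatrix}\big)$ reduces it to ${\rm diag}\big(A_{11}-c^{-1}A_{12}A_{12}^*,\,cI_{K_2}\big)$, so $A_{11}-c^{-1}A_{12}A_{12}^*\ge 0$. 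Compressing this inequality by $I_{K_1}-P$ and using $(I_{K_1}-P)A_{11}=0$ gives $(I_{K_1}-P)A_{12}A_{12}^*(I_{K_1}-P)=0$; with $T=(I_{K_1}-P)A_{12}$ this reads $TT^*=0$, so $\|T\|^2=\|TT^*\|=0$ and $T=0$, the desired inclusion (when $A_{22}=0$ positivity forces $A_{12}=0$ directly). This is the step I expect to require the most care, since over Hilbert $C^*$-modules range-inclusion conclusions rely on the closed-range hypothesis in place of the usual Douglas factorization.

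It remains to verify the $\{1,3\}$-inverse formula (\ref{equ:1 3 inverse of A}). Denote by $Z$ the right-hand side of (\ref{equ:expression of A+-special case-}) and by $W$ the right-hand side of (\ref{equ:1 3 inverse of A}); they differ only in their first summand, by the left factor $X_L(A)$, so
$$Z-W=\big(X_L(A)-I_{K_1\oplus K_2}\big)\,{\rm diag}(A_{11}^\dag,0)\,X_R(A).$$
By the first equality in (\ref{equ:two items =A}) we have $AX_L(A)=A$, hence $A(Z-W)=0$ and so $AW=AZ=AA^\dag$. Consequently $(AW)^*=(AA^\dag)^*=AA^\dag=AW$ and $AWA=AA^\dag A=A$, which shows $W\in A\{1,3\}$ and establishes (\ref{equ:1 3 inverse of A}).
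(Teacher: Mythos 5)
Your proposal is correct, and its skeleton coincides with the paper's proof: both identify $F_2(A)=F_1(A)^*$ from self-adjointness of $A_{11}^\dag$, specialize Theorem~\ref{thm:2 x 2 first result}, and obtain the $\{1,3\}$-inverse claim from $AX_L(A)=A$, which gives $AA^{(1,3)}=AA^\dag$ and hence properties (1) and (3) simultaneously. The one genuine divergence is how the hypotheses (\ref{equ:condition of two ranges}) are verified. The paper disposes of this in one line by citing \cite[Corollary 3.5]{xu3}, which asserts that positivity of $A$ together with closedness of ${\cal R}(A_{11})$ already yields $A_{11}\ge 0$, $A_{12}=A_{11}A_{11}^\dag A_{12}$ and $S(A)\ge 0$. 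You instead re-derive the needed range inclusion from scratch: the congruence trick applied to $A+{\rm diag}(0,\|A_{22}\|I_{K_2}-A_{22})\ge 0$ gives $A_{12}A_{12}^*\le \|A_{22}\|A_{11}$, and compressing by $I_{K_1}-A_{11}A_{11}^\dag$ annihilates the right-hand side, forcing $(I_{K_1}-A_{11}A_{11}^\dag)A_{12}=0$ via the $C^*$-identity $\|T\|^2=\|TT^*\|$. This argument is sound in the Hilbert $C^*$-module setting -- positivity can be tested against the $\mathfrak{A}$-valued inner product by \cite[Lemma 4.1]{lance}, and working with the projection $A_{11}A_{11}^\dag=A_{11}^\dag A_{11}$ correctly sidesteps the unavailability of Douglas factorization, exactly as you anticipate -- so your version makes the corollary self-contained where the paper leans on an external result. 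Two small remarks: your reduction of the two conditions in (\ref{equ:condition of two ranges}) to the single equality $(I_{K_1}-A_{11}A_{11}^\dag)A_{12}=0$ is justified because $A_{21}=A_{12}^*$ and $A_{11}^\dag A_{11}$ is self-adjoint, so the second condition is the adjoint of the first; and the case split at $A_{22}=0$ can be avoided altogether by running the congruence with any constant $c>\|A_{22}\|$ rather than $c=\|A_{22}\|$.
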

\begin{proof}Since $A$ is positive, by
\cite[Corollary 3.5]{xu3} we have
\begin{equation}\label{equ:three conditions of positive matrix}A_{11}\ge 0, \ A_{12}=A_{11}A_{11}^\dag A_{12} \  \mbox{and}\ S(A)\ge 0.
\end{equation}
As $(A_{11}^\dag)^*=A_{11}^\dag$, conditions in (\ref{equ:condition
of two ranges}) are satisfied. Note that in this case
$F_2(A)=\big(F_1(A)\big)^*$, (\ref{equ:expression of A+-special
case-}) follows from (\ref{equ:expression of A+-special case}). Let
$A^{(1,3)}$ be the operator given by (\ref{equ:1 3 inverse of A}).
As $AX_L(A)=A$ we have $AA^{(1,3)}=AA^\dag$, so $A^{(1,3)}$ is a
$\{1,3\}$-inverse of $A$.
\end{proof}

\subsubsection{General case} Let
\begin{equation}\label{equ:defn of AA*=E}E=AA^*\stackrel{def}{=}\left(
            \begin{array}{cc}
              E_{11} & E_{12} \\
              E_{12}^* & E_{22} \\
            \end{array}
          \right)\in {\cal L}(K_1\oplus K_2).\end{equation}
If $(A_{11}, A_{12})$ has a closed range, then as $E_{11}=(A_{11},
A_{12})(A_{11}, A_{12})^*$, by  Lemma~1.1 and
Proposition~\ref{prop:trivial-3} we know that $E_{11}^\dag$ exists
such that $(A_{11},A_{12})^\dag=(A_{11},A_{12})^* E_{11}^\dag$. Let
$S(E)=E_{22}-E_{12}^*E_{11}^\dag E_{12}$ be the Schur complement of
$E$. Assuming further that both $A$ and $S(E)$ have  closed ranges,
then for any $\{1,3\}$-inverse $E^{(1,3)}$ of $E$, we have
$A^\dag=A^*E^{(1,3)}$. In particular,  by (\ref{equ:1 3 inverse of
A}) we have
\begin{equation}\label{equ:formula for A dag-general} A^\dag=A^*\cdot \left[{\rm
diag}(E^\dag_{11},0)\,X_R(E)+F_1(E)\,S(E)^g\,F_1(E)^*\right],
\end{equation}
where
\begin{eqnarray}&&\label{eqn:defn of F_1(E)}F_1(E)=\binom{-E_{11}^\dag E_{12}}{I_{K_2}}\in {\cal L}(K_2, K_1\oplus K_2), \\
&&\label{eqn:S-E-g}S(E)^g=S(E)^\dag_{\left[F_1(E)^*F_1(E)\right]^{-1},F_1(E)^*F_1(E)}\in
{\cal L}(K_2),\\
&&\label{eqn:X-R_E}X_R(E)=I_{K_1\oplus K_2}-\big(F_1(E)^*\big)^\dag
\big(I_{K_2}-S(E)S(E)^g\big)F_1(E)^*\in {\cal L}(K_1\oplus K_2).
\end{eqnarray}

\subsection{The weighted case}\label{sec:weighted case}

Following the line initiated in \cite{xu1} for $1\times 2$
partitioned operators, in this subsection we provide an approach
 to
the construction of  Moore-Penrose inverses of $2\times 2$ partitioned
operators from the non-weighted case to the weighted case. A
detailed description of our idea can be illustrated as follows.

For any Hilbert $\mathfrak{A}$-module $X$, and any projection $P$ of
${\cal L}(X)$, let $X_1=PX$ and $X_2=(I_X-P)X$, and define
$\lambda_X: X\to X_1\oplus X_2$ by
\begin{equation}\label{equ:lambda x}\lambda_X(x)=\binom{Px}{x-Px},\ \mbox{for any}\ \ x\in
X.\end{equation} Then $\lambda_X$ is a unitary operator with
$\lambda_X^*=\lambda_X^{-1}$, where $\lambda_X^{-1}: X_1\oplus
X_2\to X$ is given by
$$\lambda_X^{-1}\binom{x_1}{x_2}=x_1+x_2,\ \mbox{for any}\ x_i\in X_i,
i=1,2.$$

Now let $H_1$ and $H_2$ be two Hilbert
$\mathfrak{A}$-modules, \begin{equation}\label{equ:2 by 2 of N}N=\left(\begin{array}{ccc} N_{11} & N_{12}\\
N_{12}^* & N_{22}\end{array}\right)\in {\cal L}(H_1\oplus
H_2)\end{equation} be a positive definite operator, where $N_{11}\in
{\cal L}(H_1)$, $N_{12}\in {\cal L}(H_2, H_1)$ and $N_{22}\in {\cal
L}(H_2)$. Let $S(N)=N_{22}-N_{12}^*N_{11}^{-1}N_{12}$ be the Schur
complement of $N$. Define
\begin{equation}\label{equ:defn of P a X} a=N_{11}^{-1}N_{12}, \ P=\left(\begin{array}{ccc} I_{H_1} & a\\
0 & 0\end{array}\right) \ \mbox{and}\quad X=(H_1\oplus
H_2)_N.\end{equation} Then $P^2=P$ and $NP=P^*N$, so
$P^\#=N^{-1}P^*N=P$, which means that $P\in {\cal L}(X)$ is a
projection of ${\cal L}(X)$, where $X$ is the weighted space define
by (\ref{equ:defn of P a X}) whose inner-product is given by
\begin{eqnarray*} &&\Big<\binom{x_1}{y_1}, \binom{x_2}{y_2}\Big>_N=\Big<\binom{x_1}{y_1}, N\binom{x_2}{y_2}\Big>=\big<x_1,
N_{11}x_2+N_{12}y_2\big>+\big<y_1,
N_{12}^*x_2+N_{22}y_2\big>\end{eqnarray*} for any $x_i\in H_1$ and
$y_i\in H_2, i=1,2.$ By (\ref{equ:defn of P a X}) we have
\begin{eqnarray}\label{eqn:defn of X1} && X_1=PX=\left\{\binom{h_1+ah_2}{0}\,\bigg|\,h_i\in H_i\right\}=\left\{
\bigg(\begin{array}{c} u\\0\end{array}\bigg)\,\bigg|\,u\in H_1
\right\},\hspace{1em}{}\\
\label{eqn:defn of X2+}&&
X_2=(I_X-P)X=\left\{\binom{-ah_2}{h_2}\,\bigg|\, h_2\in
H_2\right\}.\end{eqnarray} With the inner products inherited from
$X$,  both $X_1$ and $X_2$ are Hilbert $\mathfrak{A}$-modules. Let
$j_{H_1}: (H_1)_{N_{11}}\to X_1$ and $j_{H_2}: (H_2)_{S(N)}\to X_2$
be defined by
$$j_{H_1}(h_1)=\binom{h_1}{0}\ \mbox{and} \ j_{H_2}(h_2)=\binom{-ah_2}{h_2},\
\mbox{for any}\ h_i\in H_i, i=1,2.$$ It is easy to verify that
both $j_{H_1}$ and $j_{H_2}$  are unitary operators with
$$j_{H_1}^{-1}\binom{h_1}{0}=h_1\ \mbox{and}\
j_{H_2}^{-1}\binom{-ah_2}{h_2}=h_2,\ \mbox{for any}\ h_i\in H_i,
i=1,2.$$ Let $j_{H_1}\oplus j_{H_2}: (H_1)_{N_{11}}\oplus
(H_2)_{S(N)}\to X_1\oplus X_2$ be the associated unitary operator
defined by
$$(j_{H_1}\oplus
j_{H_2})\binom{h_1}{h_2}=\binom{j_{H_1}(h_1)}{j_{H_2}(h_2)}=\left(
                                                              \begin{array}{c}
                                                                h_1 \\
                                                                0 \\
                                                                ---\\
                                                                -ah_2 \\
                                                                h_2 \\
                                                              \end{array}
                                                            \right)
,\ \mbox{for any}\ h_i\in H_i, i=1,2.$$ Then clearly,
$(j_{H_1}\oplus j_{H_2})^\#=(j_{H_1}\oplus
j_{H_2})^{-1}=j_{H_1}^{-1}\oplus j_{H_2}^{-1}=j_{H_1}^\#\oplus
j_{H_2}^\#.$

Now suppose that $K_1$ and
$K_2$ are two additional Hilbert $\mathfrak{A}$-modules, and \begin{equation}\label{equ:2 by 2 of M}M=\left(\begin{array}{ccc} M_{11} & M_{12}\\
M_{12}^* & M_{22}\end{array}\right)\in {\cal L}(K_1\oplus
K_2)\end{equation} is a positive definite operator, where $M_{11}\in
{\cal L}(K_1)$, $M_{12}\in {\cal L}(K_2, K_1)$ and $M_{22}\in {\cal
L}(K_2)$. Let $S(M)=M_{22}-M_{12}^*M_{11}^{-1}M_{12}$ be the Schur
complement of $M$, and define
\begin{equation}\label{equ:defn of Q b Y} b=M_{11}^{-1}M_{12}, \ Q=\left(\begin{array}{ccc} I_{K_1} & b\\
0 & 0\end{array}\right) \ \mbox{and}\quad Y=(K_1\oplus
K_2)_M.\end{equation} Similarly, define $Y_1=QY, Y_2=(I_Y-Q)Y,
\lambda_Y: Y\to Y_1\oplus Y_2$, $j_{K_1}: (K_1)_{M_{11}}\to Y_1$ and
$j_{K_2}: (K_2)_{S(M)}\to Y_2$.

With the notation as above and suppose further that $$A=\left(
                                             \begin{array}{cc}
                                               A_{11} & A_{12} \\
                                               A_{21} & A_{22} \\
                                             \end{array}
                                           \right)\in {\cal
                                           L}(H_1\oplus H_2,
                                           K_1\oplus K_2),$$ where
$A_{11}\in {\cal L}(H_1,K_1), A_{12}\in {\cal L}(H_2, K_1),
A_{21}\in {\cal L}(H_1,K_2)$ and $A_{22}\in {\cal L}(H_2,K_2)$. Then
we have the following commutative diagram:

\centerline{\xymatrix{\label{commutative diagram}
  (H_1)_{N_{11}}\oplus (H_2)_{S(N)} \ar[d]_{B} \ar[r]^-{j_{H_1}\oplus j_{H_2}} & X_1\oplus X_2 \ar[r]^-{\lambda_X^{-1}} &  X=(H_1\oplus H_2)_N \ar[d]^{A} \\
 (K_1)_{M_{11}}\oplus (K_2)_{S(M)} \ar[r]^-{j_{K_1}\oplus j_{K_2}} & Y_1\oplus Y_2\ar[r]^-{\lambda_Y^{-1}} & Y=(K_1\oplus K_2)_M }}
 \noindent where \begin{equation}\label{equ:defn of B++}B=\left(
           \begin{array}{cc}
             B_{11} & B_{12} \\
             B_{21} & B_{22} \\
           \end{array}
         \right)=(j_{K_1}^{-1}\oplus j_{K_2}^{-1})\circ
         \lambda_Y\circ A\circ \lambda_X^{-1}\circ (j_{H_1}\oplus
         j_{H_2}),\end{equation}
         with
         \begin{eqnarray} &&\label{eqn:defn of B11}
         B_{11}=A_{11}+M_{11}^{-1}M_{12}A_{21},\\&&\label{eqn:defn of B12}B_{12}=A_{12}+M_{11}^{-1}M_{12}A_{22}-A_{11}N_{11}^{-1}N_{12}-M_{11}^{-1}M_{12}A_{21}N_{11}^{-1}N_{12},\\
         &&\label{eqn:defn of B21}B_{21}=A_{21},\\
         &&\label{eqn:defn of B22}B_{22}=A_{22}-A_{21}N_{11}^{-1}N_{12}.
         \end{eqnarray}
Since $\lambda_X, \lambda_Y, j_{H_1}\oplus j_{H_2}$ and
$j_{K_1}\oplus j_{K_2}$ are all unitary operators, by (\ref{equ:defn
of B++}) we get
\begin{equation}\label{equ:abstract representation of A + M N}A^\dag_{MN}=\lambda_X^{-1}\circ (j_{H_1}\oplus
j_{H_2})\circ B^\dag_{{\rm diag}\big(M_{11},S(M)\big),{\rm
diag}\big(N_{11},S(N)\big)}\circ (j_{K_1}^{-1}\oplus
j_{K_2}^{-1})\circ \lambda_Y.\end{equation} So if we let
$$B^\dag_{{\rm diag}\big(M_{11},S(M)\big),{\rm
diag}\big(N_{11},S(N)\big)} =\left(
                                          \begin{array}{cc}
                                            (B^\dag)_{11} & (B^\dag)_{12} \\
                                            (B^\dag)_{21} & (B^\dag)_{22} \\
                                          \end{array}
                                        \right),$$
where \begin{eqnarray*}&& (B^\dag)_{11}\in {\cal
L}\big((K_1)_{M_{11}},(H_1)_{N_{11}}\big), \quad (B^\dag)_{12}\in
{\cal
L}\big((K_2)_{S(M)},(H_1)_{N_{11}}\big), \\
&& (B^\dag)_{21}\in {\cal L}\big((K_1)_{M_{11}},(H_2)_{S(N)}\big),\
(B^\dag)_{22}\in {\cal
L}\big((K_2)_{S(M)},(H_2)_{S(N)}\big),\end{eqnarray*} then by
(\ref{equ:abstract representation of A + M N}) we conclude that
$A^\dag_{MN}=\left(
                                                                   \begin{array}{cc}
                                                                     (A^\dag_{MN})_{11} & (A^\dag_{MN})_{12} \\
                                                                     (A^\dag_{MN})_{21} & (A^\dag_{MN})_{22}\\
                                                                   \end{array}
                                                                 \right)$
with $(A^\dag_{MN})_{11}\in {\cal L}(K_1,H_1)$,
$(A^\dag_{MN})_{12}\in {\cal L}(K_2,H_1)$, $(A^\dag_{MN})_{21}\in
{\cal L}(K_1,H_2)$ and $(A^\dag_{MN})_{22}\in {\cal L}(K_2,H_2)$,
such that
\begin{eqnarray} &&\label{eqn:A11}
         (A^\dag_{MN})_{11}=(B^\dag)_{11}-N_{11}^{-1}N_{12}\,(B^\dag)_{21},
         \\&&\label{eqn:A12}(A^\dag_{MN})_{12}=(B^\dag)_{11}M_{11}^{-1}M_{12}+(B^\dag)_{12}
         -N_{11}^{-1}N_{12}\,(B^\dag)_{21}M_{11}^{-1}M_{12}-N_{11}^{-1}N_{12}\,(B^\dag)_{22},\hspace{2em}{}
         \\
         &&\label{eqn:A21}(A^\dag_{MN})_{21}=(B^\dag)_{21},\\
         &&\label{eqn:A22}(A^\dag_{MN})_{22}=(B^\dag)_{21}M_{11}^{-1}M_{12}+(B^\dag)_{22}.
         \end{eqnarray}
Note that $(H_1)_{N_{11}}, (H_2)_{S(N)}, (K_1)_{M_{11}}$ and
$(K_2)_{S(M)}$ are all Hilbert $\mathfrak{A}$-modules, the
Moore-Penrose inverse  of $B_{11}\in {\cal
L}\big((H_1)_{N_{11}},(K_1)_{M_{11}}\big)$ equals
$(B_{11})^\dag_{M_{11},N_{11}}$, and the adjoint operator
$B_{11}^\#$ of $B_{11}\in {\cal
L}\big((H_1)_{N_{11}},(K_1)_{M_{11}}\big)$ equals
$N_{11}^{-1}B_{11}^*M_{11}\in {\cal L}(K_1,H_1)$. Since formula
(\ref{equ:formula for A dag-general}) is valid for any Hilbert
$\mathfrak{A}$-module operators, we may use this formula to get a
concrete expression for $B^\dag_{{\rm
diag}\big(M_{11},S(M)\big),{\rm diag}\big(N_{11},S(N)\big)}$, and
then obtain an expression for $A^\dag_{MN}$ by
(\ref{eqn:A11})--(\ref{eqn:A22}).

\vspace{2ex}

\section{A numerical example}

\begin{ex}
Let $M=\left(
                    \begin{array}{cccc}
                      2 & 0 & 1 & 0 \\
                      0 & 1 & 0 & 0 \\
                      1 & 0 & 1 & 0 \\
                      0 & 0 & 0 & 1 \\
                    \end{array}
                  \right)$, $N=\left(
                                  \begin{array}{cccc}
                                    2 & 1 & 1 & 0 \\
                                    1 & 2 & 0 & 0 \\
                                    1 & 0 & 1 & 0 \\
                                    0 & 0 & 0 & 1 \\
                                  \end{array}
                                \right)$ and
                                $A=\left(
       \begin{array}{cc}
         A_{11} & A_{12} \\
         A_{21} & A_{22} \\
       \end{array}
     \right)$ with
\begin{eqnarray*}&&A_{11}=\left(
                            \begin{array}{cc}
                              1 & 0 \\
                              0 & 0 \\
                            \end{array}
                          \right), A_{12}=\left(
                                     \begin{array}{cc}
                                       1 & -1 \\
                                       1 & 3 \\
                                     \end{array}
                                   \right), A_{21}=\left(
                                              \begin{array}{cc}
                                                0 & -2 \\
                                                0 & 0 \\
                                              \end{array}
                                            \right) \ \mbox{and}\  A_{22}=\left(
                                                       \begin{array}{cc}
                                                         0 & 2 \\
                                                         0 & 0 \\
                                                       \end{array}
                                                     \right).
                                                     \end{eqnarray*}
Then $M_{11}=\left(
           \begin{array}{cc}
             2 & 0 \\
             0 & 1 \\
           \end{array}
         \right),
N_{11}=\left(
           \begin{array}{cc}
             2 & 1 \\
             1 & 2 \\
           \end{array}
         \right)$,  $S(M)=\left(
         \begin{array}{cc}
           \frac12 & 0 \\
           0 & 1 \\
         \end{array}
       \right)$ and $S(N)=\left(
                         \begin{array}{cc}
                           \frac{1}{3} & 0 \\
                           0 & 1 \\
                         \end{array}
                       \right)$. By (\ref{eqn:defn of B11})--(\ref{eqn:defn of
         B22}) we have
\begin{eqnarray*}&&B_{11}=\left(
                          \begin{array}{cc}
                            1 & -1 \\
                            0 & 0 \\
                          \end{array}
                        \right), B_{12}=\left(
                                   \begin{array}{cc}
                                     0 & 0 \\
                                     1 & 3 \\
                                   \end{array}
                                 \right), B_{21}=\left(
                                                   \begin{array}{cc}
                                                     0 & -2 \\
                                                     0 & 0 \\
                                                   \end{array}
                                                 \right), B_{22}=\left(
                                                                   \begin{array}{cc}
                                                                     -\frac23 & 2 \\
                                                                     0 & 0 \\
                                                                   \end{array}
 \right).\end{eqnarray*}
Note that the matrix $B=(B_{ij})_{1\le i,j\le 2}$, regarded as an
element of \begin{eqnarray*}&&{\cal L}\left((H_1)_{N_{11}}\oplus
(H_2)_{S(N)}, (K_1)_{M_{11}}\oplus (K_2)_{S(M)}\right)\\
&&={\cal L}\left(\left(H_1\oplus H_2\right)_{{\rm
diag}\big(N_{11},S(N)\big)},\left(K_1\oplus K_2\right)_{{\rm
diag}\big(M_{11},S(M)\big)}\right),\end{eqnarray*} whose conjugate
$B^\#$ is given by
\begin{eqnarray*}&&B^\#={\rm
diag}\big(N_{11},S(N)\big)^{-1}\cdot B^*\cdot {\rm
diag}\big(M_{11},S(M)\big)=\left(
                             \begin{array}{cccc}
                               2 & 0 & \frac{1}{3} & 0 \\
                               -2 & 0 & -\frac{2}{3} & 0 \\
                               0 & 3 & -1 & 0 \\
                               0 & 3 & 1 & 0 \\
                             \end{array}
                           \right).
\end{eqnarray*}
Let $E=BB^\#=\left(
              \begin{array}{cc}
                E_{11} & E_{12} \\
                E_{12}^* & E_{22} \\
              \end{array}
            \right)
\in {\cal L}\left((K_1)_{M_{11}}\oplus (K_2)_{S(M)}\right)$, where
\begin{eqnarray*}&&E_{11}=\left(
           \begin{array}{cc}
             4 & 0 \\
             0 & 12 \\
           \end{array}
         \right), E_{12}=\left(
                           \begin{array}{cc}
                             1 & 0 \\
                             2 & 0 \\
                           \end{array}
                         \right),
E_{21}=\left(
  \begin{array}{cc}
    4 & 4 \\
    0 & 0 \\
  \end{array}
\right)   \ \mbox{and}\ E_{22}=\left(
                                                       \begin{array}{cc}
                                                         4 & 0 \\
                                                         0 & 0 \\
                                                       \end{array}
                                                     \right).\end{eqnarray*}
By direct computation we have
\begin{eqnarray*}&&(E_{11})^\dag_{M_{11},M_{11}}=\left(\begin{array}{cc}
             \frac{1}{4} & 0 \\
             0 & \frac{1}{12} \\
           \end{array}
         \right),
F_1(E)=\binom{-(E_{11})^\dag_{M_{11},M_{11}} E_{12}}{I_{K_2}}=\left(
                                                                \begin{array}{cc}
                                                                  -\frac{1}{4} & 0 \\
                                                                  -\frac{1}{6} & 0 \\
                                                                  1 & 0 \\
                                                                  0 & 1 \\
                                                                \end{array}
                                                              \right),
\\
&&F_1(E)^\#=S(M)^{-1}\cdot F_1(E)^*\cdot {\rm
diag}\left(M_{11},S(M)\right)=\left(
                                \begin{array}{cccc}
                                  -1 & -\frac{1}{3} & 1 & 0 \\
                                  0 & 0 & 0 & 1 \\
                                \end{array}
                              \right),
\\
&&F_1(E)^\# \cdot F_1(E)=\left(
                           \begin{array}{cc}
                             \frac{47}{36} & 0 \\
                             0 & 1 \\
                           \end{array}
                         \right), S(E)=E_{22}-E_{21}\cdot
                         (E_{11})^\dag_{M_{11},M_{11}}\cdot E_{12}=\left(
                                                                     \begin{array}{cc}
                                                                       \frac{7}{3} & 0 \\
                                                                       0 & 0 \\
                                                                     \end{array}
                                                                   \right),\\
&&Z_1\stackrel{def}{=}S(M)F_1(E)^\#\,F_1(E)=\left(
                                           \begin{array}{cc}
                                             \frac{47}{72} & 0 \\
                                             0 & 1 \\
                                           \end{array}
                                         \right),
Z_2\stackrel{def}{=}S(M)\big(F_1(E)^\# F_1(E)\big)^{-1}=\left(
                                           \begin{array}{cc}
                                             \frac{18}{47} & 0 \\
                                             0 & 1 \\
                                           \end{array}
                                         \right).
\end{eqnarray*}
Note that $\left((K_2)_{S(M)}\right)_{F_1(E)^\# F_1(E)}=(K_2)_{Z_1}$
and $\left((K_2)_{S(M)}\right)_{\big(F_1(E)^\#
F_1(E)\big)^{-1}}=(K_2)_{Z_2}$, so by (\ref{eqn:S-E-g}) we have
$$S(E)^g=S(E)^\dag_{Z_2,Z_1}=\left(
                               \begin{array}{cc}
                                 \frac{3}{7} & 0 \\
                                 0 & 0 \\
                               \end{array}
                             \right).
$$
Let $T=\left(
         \begin{array}{cc}
           -1 & -\frac{1}{3} \\
           0 & 0 \\
         \end{array}
       \right)\in {\cal L}\left((K_1)_{M_{11}},
(K_2)_{S(M)}\right)$ and $Z_3={\rm diag}\big(M_{11}, S(M)\big)$. As
$$F_1(E)^\#=(T,I_{K_2})
\in {\cal L}\left((K_1)_{M_{11}}\oplus (K_2)_{S(M)},
(K_2)_{S(M)}\right)={\cal L}\left(\left(K_1\oplus K_2\right)_{Z_3},
(K_2)_{S(M)}\right),$$ if we replace $H_1,H_2,H_3,A,B,M,N_1,L$ and
$N_2$ with $K_1,K_2,K_2, T, I_{K_2}, S(M), M_{11},0$ and $S(M)$
respectively, then we may apply Theorem~3.3 to get
$$\big(F_1(E)^\#\big)^\dag_{S(M),Z_3}=\left(
    \begin{array}{cc}
      T^\dag_{S(M),M_{11}}-D\widetilde{U} \\
      \widetilde{U} \\
    \end{array}
  \right),$$
where
\begin{eqnarray*}&&D=T^\dag_{S(M),M_{11}}=\left(
                                                                        \begin{array}{cc}
                                                                          -\frac{9}{11} & 0 \\
                                                                          -\frac{6}{11} & 0 \\
                                                                        \end{array}
                                                                      \right),\
\widetilde{S}=S(M)+D^* M_{11} D=\left(
                           \begin{array}{cc}
                             \frac{47}{22} & 0 \\
                             0 & 1 \\
                           \end{array}
                         \right),\\
                         &&
                         C=I_{K_2}-TT^\dag_{S(M),M_{11}}=\left(
                                \begin{array}{cc}
                                  0 & 0 \\
                                  0 & 1 \\
                                \end{array}
                              \right),\
                              C^\dag_{S(M),\widetilde{S}}=C,\\
                              \\
&&\widetilde{U}=C^\dag_{S(M),\widetilde{S}}+\left(I_{K_2}-C^\dag_{S(M),\widetilde{S}}\,
C\right)(\widetilde{S})^{-1}D^* M_{11} T^\dag_{S(M),M_{11}}=\left(
                                                 \begin{array}{cc}
                                                   \frac{36}{47} & 0 \\
                                                   0 & 1 \\
                                                 \end{array}
                                               \right).
\end{eqnarray*}
Therefore,
$$\big(F_1(E)^\#\big)^\dag_{S(M),Z_3}=\left(
                    \begin{array}{cc}
                      -\frac{9}{47} & 0 \\
                      -\frac{6}{47}& 0 \\
                      \frac{36}{47} & 0 \\
                      0 & 1 \\
                    \end{array}
                  \right).
$$
It follows from (\ref{eqn:X-R_E}) that
\begin{eqnarray*}&&
X_R(E)=I_{K_1\oplus K_2}-\big(F_1(E)^\#\big)^\dag_{S(M),Z_3}
\big(I_{K_2}-S(E)S(E)^g\big)F_1(E)^\#={\rm
diag}(1,1,1,0),\end{eqnarray*} hence by (\ref{equ:formula for A
dag-general}) we get
\begin{eqnarray*}&& B^\dag_{{\rm diag}\big(M_{11},S(M)\big),{\rm
diag}\big(N_{11},S(N)\big)}\\
&&=B^\#\cdot \left[{\rm
diag}\left((E_{11})^\dag_{M_{11},M_{11}},0\right)\,X_R(E)+F_1(E)\,S(E)^g\,F_1(E)^\#\right]=\left(
                                          \begin{array}{cc}
                                            (B^\dag)_{11} & (B^\dag)_{12} \\
                                            (B^\dag)_{21} & (B^\dag)_{22} \\
                                          \end{array}
                                        \right),
\end{eqnarray*}
where
\begin{eqnarray*}&& (B^\dag)_{11}=\left(
                                       \begin{array}{cc}
                                         \frac{4}{7} & \frac{1}{42} \\
                                         -\frac37 & \frac{1}{42} \\
                                       \end{array}
                                     \right), \quad (B^\dag)_{12}=\left(
                                                              \begin{array}{cc}
                                                                -\frac{1}{14} & 0 \\
                                                                -\frac{1}{14} & 0 \\
                                                              \end{array}
                                                            \right),\\&&(B^\dag)_{21}=\left(
                                                                                    \begin{array}{cc}
                                                                                      \frac{9}{14} & \frac{13}{28} \\
                                                                                      -\frac{3}{14} & \frac{5}{28} \\
                                                                                    \end{array}
                                                                                  \right),
\quad (B^\dag)_{22}=\left(
                \begin{array}{cc}
                  -\frac{9}{14} & 0 \\
                  \frac{3}{14} & 0 \\
                \end{array}
              \right).
\end{eqnarray*}
It follows from (\ref{eqn:A11})--(\ref{eqn:A22}) that
\begin{eqnarray*}&&(A^\dag_{MN})_{11}=\left(
                            \begin{array}{cc}
                              \frac17 & -\frac27 \\
                              -\frac{3}{14} & \frac{5}{28} \\
                            \end{array}
                          \right), \quad (A^\dag_{MN})_{12}=\left(
                                            \begin{array}{cc}
                                              \frac{3}{7} & 0 \\
                                              -\frac{11}{28} & 0 \\
                                            \end{array}
                                          \right),\\
&&(A^\dag_{MN})_{21}=\left(
           \begin{array}{cc}
             \frac{9}{14} & \frac{13}{28} \\
             -\frac{3}{14} & \frac{5}{28} \\
           \end{array}
         \right),\quad (A^\dag_{MN})_{22}=\left(
                                \begin{array}{cc}
                                  -\frac{9}{28} & 0 \\
                                  \frac{3}{28} & 0 \\
                                \end{array}
                              \right),\end{eqnarray*}
therefore,
$$A^\dag_{MN}=\left(
                \begin{array}{cccc}
                  \frac17& -\frac27 & \frac37 & 0 \\
                  -\frac{3}{14} & \frac{5}{28} & -\frac{11}{28} & 0 \\
                  \frac{9}{14} & \frac{13}{28} & -\frac{9}{28} & 0 \\
                  -\frac{3}{14} & \frac{5}{28} & \frac{3}{28} & 0 \\
                \end{array}
              \right).$$
\end{ex}

\vspace{2ex}

\vspace{8ex}

{\textbf Acknowledgements}

\vspace{2ex}

We sincerely thank Professor Richard A. Brualdi for his help, and the referee for his/her very useful comments and suggestions.

\vspace{2ex}
\bibliographystyle{plain}

\end{document}